\newtheorem{theorem}{Theorem}[section]
\newtheorem{corollary}[theorem]{Corollary}
\newtheorem{definition}[theorem]{Definition}
\newtheorem{lemma}[theorem]{Lemma}
\newtheorem{proposition}[theorem]{Proposition}
\newtheorem{remark}[theorem]{Remark}
\newenvironment{proof}[1][Proof]{\textbf{#1.} }{\ \rule{0.5em}{0.5em}}
\def\R{{\Bbb R}}
\def\P{{\Bbb P}}
\def\al{{\alpha}}
\def\frac#1#2{{#1\over #2}}
\def\square{{\vcenter{\vbox{\hrule height.3pt
         \hbox{\vrule width.3pt height5pt \kern5pt
            \vrule width.3pt}
         \hrule height.3pt}}}}
\def\tlint{{- \kern-0.85em \int \kern-0.2em}}  % for textstyle
\def\dlint{{- \kern-1.05em \int \kern-0.4em}}  % for displays
\def\RR{\mathbb{R}}
\def\EE{\mathbb{E}}
\def\al{{\alpha}}
\def \eref#1{\hbox{(\ref{#1})}}
\def\EE{\mathbb{ E}\ }
\def \eref#1{\hbox{(\ref{#1})}}
\def\al{{\alpha}}
\begin{document}

\title{Feynman-Kac formula for fractional heat equations driven by fractional white
noises }
\author{Xia {\sc Chen}
\thanks{X. Chen is partially supported by NSF grant \#DMS-0704024}, \ \ Yaozhong {\sc Hu}\thanks{Y.  Hu is
partially supported by a grant from the Simons Foundation
\#209206.}\ \  and \ \ Jian {\sc Song} }\date{} \maketitle

\begin{abstract}
We obtain a Feynman-Kac formula for  the solution of a stochastic  fractional
 heat equation driven by fractional  noise.  One of the   main difficulties
  is to show the exponential integrability of some singular
 nonlinear  functional of symmetric stable L\'evy motion.  This difficulty will
 be  overcome by the technique of   logarithmic generating function of sub-additive processes,
 which explores the scaling property of the stable L\'evy motion.
 This Feynman-Kac formula   is applied to obtain the H\"older continuity and the moment formula
of the solution.
\end{abstract}

\begin{quote} {\footnotesize
\underline{Key-words}: Feynman-Kac formula, stochastic fractional heat equation, fractional Brownian sheet, Malliavin calculus, symmetric $\alpha$-stable L\'evy motion, sub-additive process, large deviation.}

\end{quote}

\setcounter{equation}{0}

\section{Introduction}
Let $0<\al\le 2$ and let $-(-\Delta)^{\frac\al2}$ be the fractional  Laplacian.
In this paper, we shall obtain a Feynman-Kac representation for a solution of
the following stochastic partial differential equation driven by   fractional noise
\begin{equation}\label{e.1.1}
\begin{cases}
\dfrac{\partial u}{\partial t}=-(-\Delta)^{\frac{\alpha}{2}}u+u\dfrac{\partial ^{d+1}W}{%
\partial t\partial x_1 \cdots \partial x_d} \\
u(0,x)=f(x)\,,%
\end{cases}
\end{equation}%
where $W(t,x)$ is a fractional Brownian sheet with Hurst
parameters $H_{0}$ in time and $(H_1, \dots, H_d)$ in space,
respectively.  The product between $u$ and the noise is
understood in Stratonovich sense (see definition \ref{def2} for detail).
More specifically, let   $H=(H_0, H_1, \cdots, H_d)$ satisfy
\begin{equation}
\frac12<H_0,H_1,\cdots,H_d<1,\label{Hurst1}
\end{equation}%
and
\begin{equation}
2H_0+\frac1\alpha\sum_{i=1}^d(2H_i-2)>1\,.\label{Hurst2}
\end{equation}%
Under the above conditions \eref{Hurst1} and \eref{Hurst2}, we shall show that the
following expression
\begin{equation}
u(t,x)=E^{X}\left[ f(X_{t}^{x})\exp \left(
\int_{0}^{t}\int_{\mathbb{R}^d} \delta
(X_{t-r}^{x}-y)W(dr,dy)\right) \right]    \label{feynman}
\end{equation}%
is well-defined and is a weak solution   (see Definition \ref{def}) to the above  equation
\eref{e.1.1},
where $E^{X}$ denotes the expectation with respect to the
$d$-dimensional symmetric $\alpha$-stable L\'evy motion $%
X_{t}^{x}$, and $\delta$ denotes the Dirac delta function.
The identity \eref{feynman} is called   the Feynman-Kac
formula for  \eref{e.1.1}.

To justify the Feynman-Kac formula, there are three tasks to complete. The first one
is to show that  $V^{\al, H}_{t,x}:=\int_{0}^{t}\int_{\mathbb{R}^d} \delta
(X_{t-r}^{x}-y)W(dr,dy)$ is a well-defined random variable since the integrand involves a Dirac
delta function
(in the future when there is no ambiguity we shall omit the explicit dependence on
 $\al, H$ in
$V^{\al, H}_{t,x}$). The second task is to prove the exponential integrability
(which is  one of the main difficulties)  of
$V^{\al, H}_{t,x} $ so that $u(t, x)$ given by \eref{feynman} is well-defined.  The third
task is to prove that such defined $u(t, x)$ is indeed a (weak) solution to \eref{e.1.1}.

%When $\displaystyle \dfrac{\partial ^{d+1}W}{%
%\partial t\partial x_1 \cdots \partial x_d}$ in \eref{e.1.1} is replaced by a (deterministic)
%continuous function $c(t,x)$, this
%is the classical Feynman-Kac formula, which has been widely studied
%(see \cite{fred}).
%
%To prove our main results, one of the main difficulties is the exponential integrability
%of $\int_{0}^{t}\int_{\mathbb{R}^d} \delta
%(X_{t-r}^{x}-y)W(dr,dy)$, appeared in \eref{feynman}.

When  $-(-\Delta)^{\al/2}$ in \eref{e.1.1}  is replaced by $\frac12\Delta$,
the corresponding Feynman-Kac formula  was first studied   in \cite{hns},
where the   $\alpha$-stable L\'evy motion $X_t^x$ in \eref{feynman} is replaced
by the standard Brownian motion. In this case (and under our conditions
\eref{Hurst1} and \eref{Hurst2}), the above first task is relatively easier.
One of the main difficulties is   the second task which is to show the exponential
integrability of $V_{t, x}^{2, H}$ (The number $2$ means that the
$X_t^x$ is replaced by  Brownian motion $B_t+x$).
The idea in \cite{hns} is to obtain precise moments of
$V_{t, x}^{2, H}$, which appealed  to an idea of Le Gall \cite{legall}.
To complete the third task, the Malliavin calculus is used.

Still for the Laplacian, the fractional noise with   Hurst
parameter
 $H_0<\frac12$ has been studied in a recent paper \cite{hln}.  Namely, it is  studied
 the stochastic heat equation
driven by $\dfrac{\partial W}{\partial t}(t,x)$, where $\{W(t,x),,t\ge0,x\in\R^d\}$
is Gaussian random field with covariance function \[E(W(t,x)W(s,y))
=\frac12(t^{2H_0}+s^{2H_0}-|t-s|^{2H_0})Q(x,y),\] where
$H_0\in(\frac14,\frac12)$ and $Q(x,y)$ is a locally H\"older
continuous function. Heuristically, comparing with the noise
in \cite{hns}, the noise $\dfrac{\partial W}{\partial t}(t,x)$ is
rougher in time, while as a compensation, is more regular in space.
Technically, with less regularity in time, we need more regularity
in space to make sense of the term $\displaystyle\int_0^t\int_{\R^d}
\delta(B_{t-r}^x-y) W(dr,y)=\int_0^t\int_{\R^d}W(dr,B_{t-r}^x)$,
where $B$ is a $d$-dimension Brownian
motion. The main task of
that paper is to complete
the first task.

Return to our fractional Laplacian case. We
shall only deal with the case $H_0>1/2$.
In this case it is expected that the main difficulty
is also to complete the second task.  Presumably, one can still try to use
the moment method.%
%  the techniques we shall use  to prove
%the exponential integrability of $\int_{0}^{t}\int_{\mathbb{R}^d} \delta
%(X_{t-r}^{x}-y)W(dr,dy)$ (which is one
%of the main ingredients to justify the Feynman-Kac formula \eref{feynman})
%is completely different  and is much simpler.  In Theorem 3.3 of the paper \cite{hns},
% the authors used moment method to explore the exponential integrability,
%  the idea of which was inspired from \cite{legall}, and we don't know
%  whether this approach can be applied to the case of stable processes.
However, we are not sure if it   works  or not since
it is very technical and long even in the Brownian motion case.
We used  freely the high moments of the Brownian motions in \cite{hns}.
However,  the $\al$-stable L\'evy motion has no second or
higher moments.
Instead of following the approach of \cite{hns},
we shall use the logarithmic moment generating function
  of sub-additive processes (see Theorem 1.3.5 in
  \cite{chen}). This technique is to explore
  the scaling property of the $\al$-stable L\'evy motion to obtain an concentration
  inequality and then
  the concentration
  inequality is used to obtain  the desired exponential integrability.
  This approach will largely simplify our computation
even in the standard Brownian motion case. In general,
we shall follow the approach of \cite{hns} and \cite{hln}
to fulfill the third task, i.e., to approximate the noise in \eref{e.1.1} by 
use Malliavin calculus to show that \eref{feynman} is indeed a weak solution to \eref{e.1.1}. As an application, we shall show the H\"older continuity of the solution given by the Feynman-Kac formula (\ref{feynman}) using the standard Kolmogorov continuity criterion as in \cite{hns}.

If we formally set $\al=2$,  then we obtain  exactly
same condition for Feynman-Kac formula to be valid as in
 \cite{hns}.
Notice that $2H_i-2<0$. The conditions for $H$ is more
restrictive when $\al$ becomes smaller. This is consistent to the intuition.
because  when $\alpha$ gets smaller, the transition probability
density of fractional Laplacian
$-(-\Delta)^{\alpha/2}$
gets less regular, and hence the range of $H_i$'s should get more
restrictive.
%(meaning that the Hurst parameters are bigger) to obtain a Feynman-Kac formula.
We can also prove that under the condition \eref{Hurst1},
the condition \eref{Hurst2} is optimal (see Proposition
\ref{best}). Thus we see that under the condition
\eref{Hurst1},   no matter how  one choose  the parameters $H_i$'s
(as close to $1$ as possible),
there is a value of $\alpha$ below which the Feyman-Kac formula can not
obtained.  On the other hand,
the condition \eref{Hurst2} only allows us  to
consider the case
$H_0>\frac12.$

There are several problems related to this topic which are
also our future work. For the reader's interest, we list some of them here.
(1) Uniqueness: We still don't know whether the solution to (\ref{e.1.1})
given by the Feynman-Kac formula (\ref{feynman}) is the unique solution
(in some function space). (2) Extension of the generator: We may extend
the fractional Laplacian $-(-\Delta)^{\frac\alpha2}$ to   more general
operator  $L$ which generates a Markov process. For fractional
Laplacian,  the condition for Feynman-Kac formula involves $\al$
in a very nice way expressed by
\eref{Hurst2}. It is very interesting to know what will be
the expression of such condition in  more general case.
  (3) General noise: (a) It is also interesting
 to  obtain a parallel result in \cite{hln}, i.e., consider
 the Gaussian random field  which has temporal Hurst parameter $H_0<1/2$;
  (b) we may also consider the case   $H_i\in(0,\frac12), i=1,\cdots, d$
 and $2H_0+\frac1\alpha\sum_{i=1}^d(2H_i-2)>1$;  (c) Extend the
 fractional Gaussian white noise to some other general  Gaussian field.
 (4) Existence of the solution: We may explore the conditions under which equation
 (\ref{e.1.1}) has a (unique) mild solution (see Definition \ref{def1}),
 which may not have a Feynman-Kac type of representation, when the product
 between $u$ and the noise is in It\^o-Skorohod
 sense.

The paper is organized as follows.  In Section 2, we very briefly
present some preliminary material on Malliavin calculus and stable L\'evy
motion that we need  to fix some notations.  In Section 3, we
give a definition to the stochastic integral in \eref{feynman}.  This is
necessary since the integrand involves the Dirac delta function.
We also show the exponential integrability of the mentioned stochastic integral.
In Section 4, we use Malliavin calculus to
prove that \eref{feynman} is a solution to \eref{e.1.1}
in a weak sense.  Section 5 is an application of Feynman-Kac formula \eref{feynman}, where we prove that the solution $u(t,x)$ given by \eref{e.1.1} has a H\"older continuous version. The main part of the paper assumes that the stochastic
integral involved in \eref{e.1.1} is in the sense of Stratonovich.  In Section 6, we obtain the Feynman-Kac formula as the unique solution to (1.1) where the stochastic integral in \eref{e.1.1} is in the sense of It\^o-Skorohod. We also obtain a Feynman-Kac formula to represent the It\^o-Wiener chaos coefficients of the solution.  A formula for the moments of the solution is also given. Finally in Section 7, we present some lemmas used in this paper.

\setcounter{equation}{0}

\section{Preliminaries}
\subsection{Fractional Brownian sheet and Malliavin calculus}
Let $W=\{W(t,x),t\geq 0,x\in \mathbb{R}%
^d\}$ be a fractional Brownian sheet with Hurst parameters $%
H_{0}\in (1/2,1)$ in time variable and $H_{i}\in (1/2, 1), i=1,\dots,d,$ in space variables, i.e., $W$ is a Gaussian random field with mean zero and  covariance function defined by 
\begin{equation*}
E(W(t,x)W(s,y))=R_{H_{0}}(s,t)\prod_{i=1}^{d }R_{H_{i}}(x_i, y_i),
\end{equation*}%
where for \ any $H\in (0,1),$ 
\begin{equation*}
R_{H}(s,t)=\dfrac{1}{2}(|t|^{2H}+|s|^{2H}-|t-s|^{2H}),
\end{equation*}
which is the covariance
function of the fractional Brownian motion with Hurst parameter $H$.

In this subsection, we shall briefly recall the definition of  Wiener-It\^o integral with respect to the fractional Brownian sheet and the related Malliavin calculus we shall use in this article. 
 
Let $\mathcal{E}$  be the linear span of the indicator functions
 $I_{(s,t]\times (x,y]}$ in $\mathbb{R}_{+}\times \mathbb{R%
}^{d}$, where $(x, y]=(x_1, y_1]\times \cdots \times (x_d, y_d]$.
Define  the inner product in  $\mathcal{E}$ 
\begin{align} \label{r1}
\langle \phi,\psi\rangle _{\mathcal{H}}=\alpha _{H}   \times \int_{\mathbb{R}_{+}^{2}\times \mathbb{R}^{2d}}\phi
(s,x)\psi    
(t,y)|s-t|^{2H_{0}-2}\prod_{i=1}^{d}|x_{i}-y_{i}|^{2H_{i}-2}dsdtdxdy,
\end{align}
where \ $\alpha _{H}=\prod_{i=0}^{d}H_{i}(2H_{i}-1)$, and the Hilbert space $\mathcal H$ is the closure of $\mathcal E$ with respect to this inner product.
Note that 
\[\langle I_{(0,s]\times (0,x]}, I_{(0,t]\times (0,y]}\rangle_{\mathcal{H}}=R_{H_0}(s,t)\times \prod_{i=1}^d R_{H_i}(x_i, y_i)=E(W(s,x)W(t,y)).\]
Furthermore, $\mathcal{H}$ contains the class of \ measurable functions $\phi $ on \ $%
\mathbb{R}_{+}\times \mathbb{R}^{d}$ such that%
\begin{equation}
\int_{\mathbb{R}_{+}^{2}\times \mathbb{R}^{2d}}|\phi (s,x)\phi
(t,y)||s-t|^{2H_{0}-2}\prod_{i=1}^{d}|x_{i}-y_{i}|^{2H_{i}-2}dsdtdxdy<\infty.
\label{r2}
\end{equation}

 The mapping $W:I_{(0,t]\times (0,x]}\rightarrow W(t,x)$ extends to a linear
isometry between $\mathcal{H}$ and the Gaussian space spanned by
$W$. This isometry defines Wiener-It\^o integral denoted by
\begin{equation*}
W(\phi )=\int_{0}^{\infty }\int_{\mathbb{R}^{d}}\phi
(t,x)W(dt,dx), \text{ for } \phi \in \mathcal{H}.
\end{equation*}

We will denote by $D$ the Malliavin derivative. That is, if $F$ is a smooth and cylindrical
random variable of the
form%
\begin{equation*}
F=f(W(\phi _{1}),\ldots ,W(\phi _{n})),
\end{equation*}%
$\phi _{i}\in \mathcal{H}$, $f\in C_{p}^{\infty }(\mathbb{R}^{n})$
($f$ and
all its partial derivatives have polynomial growth), then $DF$ is the $%
\mathcal{H}$-valued random variable defined by
\begin{equation*}
DF=\sum_{j=1}^{n}\frac{\partial f}{\partial x_{j}}(W(\phi
_{1}),\ldots ,W(\phi _{n}))\phi _{j}.
\end{equation*}%
The operator $D$ is closable from $L^{2}(\Omega )$ into $L^{2}(\Omega ;%
\mathcal{H})$ and we define the Sobolev space $\mathbb{D}^{1,2}$
as the closure of the space of smooth and cylindrical random
variables under the
norm%
\begin{equation*}
\left\| DF\right\| _{1,2}=\sqrt{E(F^{2})+E(\left\| DF\right\| _{\mathcal{H}%
}^{2})}\, .
\end{equation*}%
Let  $\delta $ be the adjoint of the derivative operator $D$,
determined  by duality formula
\begin{equation}
E(\delta (u)F)=E\left( \left\langle DF,u\right\rangle
_{\mathcal{H}}\right) , \label{dua}
\end{equation}%
for any $F\in \mathbb{D}^{1,2}$ and any element $u\in $ $L^{2}(\Omega ;%
\mathcal{H})$ in the domain of $\delta $. The operator $\delta $
is also called the Skorohod integral.  If $DF$ and $u$  are almost
surely  measurable functions on $\mathbb{R}_+ \times \mathbb{R}^d$
satisfying condition  (\ref{r2}), then the duality formula
(\ref{dua}) can be written as:
\begin{eqnarray*}
&& E\left( \delta(u)F\right) = \alpha_H  \\
&&\times  E\left( \int _ {\mathbb{R}^2_+\times \mathbb{R}^{2d}}
D_{s,x} F u(t,y)
|s-t|^{2H_{0}-2}\prod_{i=1}^{d}|x_{i}-y_{i}|^{2H_{i}-2}dsdtdxdy
\right).
\end{eqnarray*}
The following formula 
formula plays an essential role in the proof of Theorem \ref{th1}:
\begin{equation}
FW(\phi )=\delta (F\phi )+\left\langle DF,\phi \right\rangle
_{\mathcal{H}}, \label{a5}
\end{equation}%
for any $\phi \in \mathcal{H}$ and any random variable $F$ in the
Sobolev space $\mathbb{D}^{1,2}$.

We refer to Nualart
\cite{nualart} for a detailed account on the Malliavin calculus
with respect to a Gaussian process.

\subsection{Symmetric $\alpha$-stable L\'evy motion}
In this section we recall symmetric stable distribution and symmetric $\alpha$-stable L\'evy motion. For more general and detailed results about stable processes, we refer to \cite{ST}.\\
A random variable $X$ is said to be symmetric $\alpha$-stable if there are parameters $0<\alpha\le2$ and $\sigma\ge0$
such that its characteristic function satisfies  \[Ee^{i\theta X}=e^{-\sigma^\alpha|\theta|^\alpha},\] and we will denote that $X\sim S(\alpha,\sigma).$
Notice that when $\alpha=2, X$ is a Gaussian random variable. When $\alpha\in(0,2),$ we have $E|X|^p<\infty$ if $-1<p<\alpha$, and $E|X|^p=\infty$ if $p\ge \alpha.$\\
A stochastic process $\{X(t), t\ge0\}$ is called a symmetric $\alpha$-stable L\'evy motion if
\begin{itemize}
\item[(1)] $X(0)=0 $ a.s..
\item[(2)] $X$ has independent increments.
\item[(3)] $X(t)-X(s)\sim S(\alpha, (t-s)^\frac 1\alpha)$ for any $0\le s<t<\infty$ and for some $0<\alpha\le 2.$
\end{itemize}

As for the classical Brownian motion case, for the following fractional linear heat equation,
\begin{equation*}
\begin{cases}
\dfrac{\partial u}{\partial t}= -(-\Delta)^{\frac \alpha2}u+uc(t,x) \\
u(0,x)=f(x).
\end{cases}
\end{equation*}
where $f$ is a bounded measurable function and $c(t,x)$ is a continuous function of $(t,x)\in [0,\infty)\times\mathbb R^d$,
we also have the Feynman-Kac formula
\begin{equation*}
u(t,x) =E^{X}\left( f(X_{t}^{x})\exp \left(
\int_{0}^{t}c(t-s,X_{s}^{x})ds\right)
\right),
\end{equation*}%
where $X^x$ is a $d$-dimensional symmetric $\alpha$-stable L\'evy motion starting at $x$.

Throughout the paper $C$ will denote a positive constant which may vary
from one formula to another one.

\setcounter{equation}{0}

\section{Definition and exponential integrability of  the generalized stochastic convolution}

For any $\varepsilon >0$ we denote \ by $p_{\varepsilon }(x)$ the $d$%
-dimensional heat kernel
\begin{equation*}
p_{\epsilon }(x)=(2\pi \varepsilon )^{-\frac{d}{2}}e^{-\frac{|x|^{2}}{%
2\epsilon }},\;x\in \mathbb{R}^{d}.
\end{equation*}%
On the other hand, for any $\delta >0$ we define the function
\begin{equation*}
\varphi _{\delta }(t)=\dfrac{1}{\delta }I_{[0,\delta ]}(t).
\end{equation*}%
Then, $\varphi _{\delta }(t)p_{\varepsilon }(x)$ provides an
approximation of the Dirac delta function $\delta (t,x)$ as
$\varepsilon $ and $\delta $ tend to zero. We denote by
$W^{\epsilon ,\delta }$ the approximation of the fractional
Brownian sheet $W(t,x)$ defined by
\begin{equation}
W^{\epsilon ,\delta
}(t,x)=\int_{0}^{t}\int_{\mathbb{R}^{d}}\varphi _{\delta
}(t-s)p_{\epsilon }(x-y)W(s,y)dsdy\,.  \label{e1}
\end{equation}

Fix $x\in \mathbb{R}^{d}$ and $t>0$. Suppose that $X=\{X_{t},t\geq
0\}$ is a
$d$-dimensional symmetric $\alpha$-stable L\'evy motion independent of $W$. We denote by $%
X_{t}^{x}=X_{t}+x$ the symmetric $\alpha$-stable L\'evy motion starting at the
point $x$. \ We are going to define the random variable
$\int_{0}^{t}\int_{\mathbb{R}^{d}}\delta (X_{t-r}^{x}-y)W(dr,dy)$
by approximating the Dirac delta function $\delta
(X_{t-r}^{x}-y)$ by%
\begin{equation}
A_{t,x}^{\epsilon ,\delta }(r,y)=\int_{0}^{t}\varphi _{\delta
}(t-s-r)p_{\varepsilon }(X_{s}^{x}-y)ds.  \label{e2}
\end{equation}%
We will show that for any $\varepsilon >0$ and $\delta >0$\ $\
$the function \ $A_{t,x}^{\epsilon ,\delta }$ belongs to the space
$\mathcal{H}$ almost
surely, and the family of random variables%
\begin{equation}
V_{t,x}^{\varepsilon ,\delta }=\ \int_{0}^{t}\int_{\mathbb{R}%
^{d}}A_{t,x}^{\epsilon ,\delta }(r,y)W(dr,dy)\,.  \label{e3}
\end{equation}%
converges in $L^{2}$ as $\varepsilon $ and $\delta $ tend to zero.

The specific approximation chosen here will allow us in Section 4 to
construct an approximate Feynman-Kac formula with the random
potential   $\dot{W}^{\epsilon ,\delta }(t,x)$ given in (\ref{pot}).
Moreover, this approximation has the useful properties  proved in
 Lemmas \ref{lemma2} and \ref{lemma3}. We may  use  other types
of approximation schemes with similar results.   Also, we can
restrict ourselves to the special case ${\delta }={\varepsilon }$,
but the slightly more general case considered here does not need
any additional effort.

Along the paper we denote by $E^X(\Phi(X,W))$ (resp. by
$E^W(\Phi(X,W))$) the expectation of a functional $\Phi(X,W)$ with
respect to $X$ (resp. with respect to $W$). We will use $E$ for
the composition $E^XE^W$, and also in case of a random variable
depending only on $X$ or $W$.

The $L^2$ space in the paper means $L^2(\Omega) $, where $\Omega=\Omega_W\times\Omega_X$ is the whole sample space generated by both the fractional Brownian sheet $W$ and the stable process $X$, if there is no other explanatory note in the context.

\begin{theorem}
\label{teo1}Suppose that \ $2H_{0}+\frac{1}{\alpha}\sum_{i=1}^d (2H_{i}-2)>1$. Then, for any $%
\varepsilon>0 $ and $\delta >0$, $A_{t,x}^{\varepsilon ,\delta }$
defined in
(\ref{e2}) belongs to $\mathcal{H}$ and the family of random variables $%
V_{t,x}^{\epsilon ,\delta }$ defined in (\ref{e3}) converges in
$L^{2}$ to a limit denoted by
\begin{equation}
V_{t,x}=\int_{0}^{t}\int_{\mathbb{R}^d}\delta
(X_{t-r}^{x}-y)W(dr,dy)\,. \label{e.3.4}
\end{equation}
Conditional on $X$, $V_{t,x}$ is a Gaussian random variable with
mean $0$
and variance%
\begin{equation}
\mathrm{Var}(V_{t,x}|X) =\alpha _H \int_{0}^{t}\int_{0}^{t}|r
-s|^{2H_{0}-2} \prod_{i=1}^d \left| X^i_{r }-X^i_{s}\right|
^{2H_{i}-2}dr ds\,. \label{e.2.14}
\end{equation}
\end{theorem}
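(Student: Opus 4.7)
The plan is to condition on the L\'evy path $X$. Since $A_{t,x}^{\varepsilon,\delta}$ is $\sigma(X)$-measurable and $X$ is independent of $W$, once $X$ is fixed the random variable $V_{t,x}^{\varepsilon,\delta}=W(A_{t,x}^{\varepsilon,\delta})$ is a centered Gaussian with variance $\|A_{t,x}^{\varepsilon,\delta}\|_{\mathcal{H}}^{2}$. It thus suffices to establish (i) $A_{t,x}^{\varepsilon,\delta}\in\mathcal{H}$ a.s.\ for each fixed $\varepsilon,\delta>0$, and (ii) that $\{V_{t,x}^{\varepsilon,\delta}\}$ is Cauchy in $L^{2}(\Omega^{X}\times\Omega^{W})$ as $\varepsilon,\delta\downarrow 0$; the conditional Gaussian structure and the asserted conditional variance will then descend to the $L^{2}$ limit along a convergent subsequence via a characteristic-function argument.

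For (i) I would check \eref{r2} directly: $\varphi_{\delta}$ confines the time variable $r$ to the compact interval $[t-\delta,t]$, and $p_{\varepsilon}$ is bounded with Gaussian spatial decay, so $A_{t,x}^{\varepsilon,\delta}$ is bounded with rapid decay in $y$ and the integral in \eref{r2} is manifestly finite. For (ii), apply the isometry \eref{r1} to write
\begin{equation*}
E^{W}[(V_{t,x}^{\varepsilon,\delta})^{2}]=\al_{H}\!\!\int_{[0,t]^{2}\times\mathbb{R}^{2d}}\!\!A_{t,x}^{\varepsilon,\delta}(r_{1},y_{1})A_{t,x}^{\varepsilon,\delta}(r_{2},y_{2})|r_{1}-r_{2}|^{2H_{0}-2}\prod_{i=1}^{d}|y_{1}^{i}-y_{2}^{i}|^{2H_{i}-2}dr_{1}dr_{2}dy_{1}dy_{2}.
\end{equation*}
Substituting the definition \eref{e2} and using $p_{\varepsilon}\ast p_{\varepsilon}=p_{2\varepsilon}$ to carry out the $y$-integration (after centering at $X_{s_{1}}-X_{s_{2}}$), the $\varphi_{\delta}$ factors act as approximations to $\delta(r_{j}-(t-s_{j}))$ while $p_{2\varepsilon}$ smoothes the singular kernel $\prod_{i}|X_{s_{1}}^{i}-X_{s_{2}}^{i}|^{2H_{i}-2}$. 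Letting $\varepsilon,\delta\downarrow 0$ thus converts the expression, pointwise in $X$, into the right-hand side of \eref{e.2.14}.

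The main obstacle is upgrading this pointwise convergence to convergence of $E[(V_{t,x}^{\varepsilon,\delta})^{2}]$ and, by an identical argument, of the cross-moment $E[V_{t,x}^{\varepsilon_{1},\delta_{1}}V_{t,x}^{\varepsilon_{2},\delta_{2}}]$, which together yield the Cauchy property. The hypothesis $2H_{0}+\tfrac{1}{\al}\sum_{i=1}^{d}(2H_{i}-2)>1$ enters precisely at this point. Every 1-dimensional symmetric $\al$-stable variable has finite moments of order $p\in(-1,\al)$, and $2H_{i}-2\in(-1,0)$ for $H_{i}\in(\tfrac{1}{2},1)$; combining this with the self-similarity $X_{r}-X_{s}\stackrel{d}{=}|r-s|^{1/\al}(X_{1}-X_{0})$ gives
\begin{equation*}
E^{X}\!\!\left[|r-s|^{2H_{0}-2}\prod_{i=1}^{d}|X_{r}^{i}-X_{s}^{i}|^{2H_{i}-2}\right]=C\,|r-s|^{2H_{0}-2+\frac{1}{\al}\sum_{i=1}^{d}(2H_{i}-2)},
\end{equation*}
which is integrable over $[0,t]^{2}$ exactly under the stated condition. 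A uniform-in-$(\varepsilon,\delta)$ majorant of the same structure, obtained by dominating the smoothed spatial kernel pointwise by the singular one (this is the content of the approximation estimates referred to as Lemmas~2 and 3 in the paper), lets dominated convergence yield the joint convergence of $E[(V_{t,x}^{\varepsilon,\delta})^{2}]$ and of the cross-moments to the common limit $\al_{H}E^{X}\!\int_{0}^{t}\!\int_{0}^{t}|r-s|^{2H_{0}-2}\prod_{i}|X_{r}^{i}-X_{s}^{i}|^{2H_{i}-2}dr\,ds$, establishing the Cauchy property. Denoting the resulting $L^{2}$ limit by $V_{t,x}$ and invoking the fact that pointwise limits of centered Gaussians with convergent variances are centered Gaussians with the limiting variance (proved via the explicit conditional characteristic function $\exp(-\tfrac{\lambda^{2}}{2}\|A_{t,x}^{\varepsilon,\delta}\|_{\mathcal{H}}^{2})$), one concludes that $V_{t,x}\mid X$ has the asserted Gaussian law with variance \eref{e.2.14}.
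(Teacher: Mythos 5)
Your proposal is correct and follows essentially the same route as the paper: the $\mathcal{H}$-isometry, domination of the smoothed kernel by $C|s-r|^{2H_0-2}\prod_i|X^i_s-X^i_r|^{2H_i-2}$ (the paper's Lemmas \ref{lemma2} and \ref{lemma3}), integrability of this majorant via the stable scaling and finite negative moments under $\kappa>0$, dominated convergence of second and cross moments to a common limit to get the Cauchy property, and then the conditional Gaussian law with variance (\ref{e.2.14}). The only cosmetic differences are your direct pathwise check of (\ref{r2}) for fixed $\varepsilon,\delta$ (the paper instead bounds $E^X\|A^{\varepsilon,\delta}_{t,x}\|^2_{\mathcal H}$) and the harmless slip $p_\varepsilon * p_{\varepsilon'}=p_{\varepsilon+\varepsilon'}$ rather than $p_{2\varepsilon}$ in the cross terms.
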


\begin{proof}
Fix $\varepsilon $, $\varepsilon ^{\prime }$, $\delta $ and
$\delta ^{\prime
}>0$. Let us compute the\ inner product%
\begin{eqnarray}
\left\langle A_{t,x}^{\varepsilon ,\delta },A_{t,x}^{\varepsilon
^{\prime },\delta ^{\prime }}\right\rangle _{\mathcal{H}}
&=&\alpha _{H}\int_{[0,t]^{4}}\int_{\mathbb{R}^{2d}}p_{\epsilon
}(X_{s}^{x}-y)p_{\epsilon ^{\prime }}(X_{r}^{x}-z)  \notag \\
&&\times \varphi _{\delta }(t-s-u)\varphi _{\delta ^{\prime
}}(t-r-v)  \notag
\\
&&\!\!\!\!\!\!\!\!\!\times
|u-v|^{2H_{0}-2}\prod_{i=1}^{d}|y_{i}-z_{i}|^{2H_{i}-2}dydzdudvdsdr.
\label{e4}
\end{eqnarray}%
By Lemmas \ref{lemma2} and \ref{lemma3} we have the estimate
\begin{eqnarray}
&&\int_{[0,t]^{2}}\int_{\mathbb{R}^{2d}}p_{\epsilon
}(X_{s}^{x}-y)p_{\epsilon ^{\prime }}(X_{r}^{x}-z)  \notag \\
&&\times \varphi _{\delta }(t-s-u)\varphi _{\delta ^{\prime
}}(t-r-v)|u-v|^{2H_{0}-2}\prod_{i=1}^{d}|y_{i}-z_{i}|^{2H_{i}-2}dydzdudv
\notag \\
&\leq &C|s-r|^{2H_{0}-2}\prod_{i=1}^{d}\left|
X_{s}^{i}-X_{r}^{i}\right| ^{2H_{i}-2},  \label{e5}
\end{eqnarray}%
where and in what follows  $C>0$ denotes a constant dependent only on
$H=(H_0, H_1, \cdots, H_d)$ (independent of
$\epsilon$, $\delta$, $r$ and $s$). The expectation of this random variable is
integrable in $[0,t]^{2}$ because%
\begin{eqnarray}
&&E^{X}\int_{0}^{t}\int_{0}^{t}|s-r|^{2H_{0}-2}\prod_{i=1}^{d}\left|
X_{s}^{i}-X_{r}^{i}\right| ^{2H_{i}-2}dsdr  \notag \\
&=&\prod_{i=1}^{d}E|X_1
|^{2H_{i}-2}\int_{0}^{t}\int_{0}^{t}|s-r|^{2H_{0}+%
\sum_{i=1}^{d}\frac{1}{\alpha}(2H_{i}-2)-2}dsdr  \notag \\
&=&\frac{2\prod_{i=1}^{d}E|\xi |^{2H_{i}-2}t^{\kappa +1}}{\kappa
\left( \kappa +1\right) }\ <\infty ,  \label{3.8}
\end{eqnarray}%
where
\begin{equation}
\kappa =2H_{0}+\frac{1}{\alpha}\sum_{i=1}^{d}(2H_{i}-2)-1>0.
\label{kappa}
\end{equation}%
and $\xi $ is a standard symmetric $\alpha$-stable random variable.

As a consequence, taking the mathematical expectation with respect
to $X$ in
Equation (\ref{e4}), letting $\varepsilon =\varepsilon ^{\prime }$ and $%
\delta =\delta ^{\prime }$ and using the estimates (\ref{e5}) and (\ref{3.8}%
) yields%
\begin{equation*}
E^{X}\left\| A_{t,x}^{\varepsilon ,\delta }\right\|
_{\mathcal{H}}^{2}\leq C.
\end{equation*}%
This implies that $A_{t,x}^{\varepsilon,\delta}\in L^2(\Omega_X;\mathcal H)$, and hence for almost all $\omega\in \Omega_X$, $A_{t,x}^{\varepsilon ,\delta }$
belongs to the space $\mathcal{H}$, for all $\varepsilon $ and
$\delta >0$. Therefore, the random variables $V_{t,x}^{\varepsilon
,\delta }=W(A_{t,x}^{\varepsilon ,\delta })$ are well defined and
we have
\begin{equation*}
E^{X}E^{W}(V_{t,x}^{\epsilon ,\delta }V_{t,x}^{\epsilon ^{\prime
},\delta ^{\prime }})=E^{X}\left\langle A_{t,x}^{\varepsilon
,\delta
},A_{t,x}^{\varepsilon ^{\prime },\delta ^{\prime }}\right\rangle _{\mathcal{%
H}}.
\end{equation*}%
For any $s\neq r$ and $X_{s}\neq X_{r}$, as $\varepsilon $,
$\varepsilon ^{\prime }$, $\delta $ and $\delta ^{\prime }$ tend
to zero, the left-hand
side of the inequality \ (\ref{e5}) converges to $|s-r|^{2H_{0}-2}%
\prod_{i=1}^{d}\left| X_{s}^{i}-X_{r}^{i}\right| ^{2H_{i}-2}$.
Therefore, by dominated convergence theorem we obtain that \
$E^{X}E^{W}(V_{t,x}^{\epsilon
,\delta }V_{t,x}^{\epsilon ^{\prime },\delta ^{\prime }})$ converges to $%
\Sigma _{t}$ as $\varepsilon $, $\varepsilon ^{\prime }$, $\delta $ and $%
\delta ^{\prime }$ tend to zero, where
\begin{equation*}
\Sigma _{t}=\frac{2\alpha _{H}\prod_{i=1}^{d}E|\xi |^{2H_{i}-2}t^{\kappa +1}%
}{\kappa \left( \kappa +1\right) }.
\end{equation*}%
Thus we obtain
\begin{equation*}
E\ \left( V_{t,x}^{{\varepsilon },{\delta }}-V_{t,x}^{{\varepsilon
}^{\prime
},{\delta }^{\prime }\ }\right) ^{2}=E\ \left( V_{t,x}^{{\varepsilon },{%
\delta }\ }\right) ^{2}-2E\ \left( V_{t,x}^{{\varepsilon },{\delta }%
}V_{t,x}^{{\varepsilon }^{\prime },{\delta }^{\prime }}\right) +E\
\left( V_{t,x}^{{\varepsilon }^{\prime },{\delta }^{\prime }\
}\right) ^{2}\rightarrow 0\,.
\end{equation*}%
This implies that \ $V_{t,x}^{\epsilon _{n},\delta _{n}}$ is a
Cauchy sequence in $L^{2}$ for all  sequences $\varepsilon _{n}$
and $\delta _{n}$ converging to zero. As a consequence,
$V_{t,x}^{\epsilon _{n},\delta _{n}}$ converges in $L^{2}$ to a
limit denoted by $V_{t,x}$, which does not depend on the choice of
the sequences   $\varepsilon _{n}$ and $\delta _{n}$.
Furthermore, conditional on $X$, $V_{t,x}$ is Gaussian, since conditional on $X$, $V_{t,x}^{\varepsilon,\delta}$ is Gaussian. Finally, by a similar argument we can show  (\ref{e.2.14}).
\end{proof}

The next result provides the exponential integrability of the
random variable $V_{t,x}$ defined in $\ $(\ref{e.3.4}).
\begin{theorem}\label{exp}
\label{teo2} Suppose that $2H_0+\frac{1}{\alpha} \sum_{i=1}^d (2H_{i}-2)>1$. Then, for any $%
\lambda \in \mathbb{R}$, we have
\begin{equation}
E \exp \left( \lambda \int_{0}^{t}\int_{\mathbb{R}^d}\delta
(X_{t-r}^{x}-y)W(dr,dy) \right) <\infty \,.  \label{e21}
\end{equation}
\end{theorem}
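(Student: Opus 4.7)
The plan is to reduce \eref{e21} via the conditional Gaussianity of $V_{t,x}$ given $X$, and then attack the resulting exponential moment problem by combining sub-additivity of a square-root functional, self-similarity of $X$, and a moment bootstrap borrowed from \cite{chen}. By Theorem~\ref{teo1}, conditionally on the path $X$ the random variable $V_{t,x}=\int_0^t\int_{\R^d}\delta(X_{t-r}^x-y)W(dr,dy)$ is centered Gaussian with variance $\alpha_H J(X)$, where
\begin{equation*}
J(X)\;:=\;\int_0^t\!\!\int_0^t|r-s|^{2H_0-2}\prod_{i=1}^d|X_r^i-X_s^i|^{2H_i-2}\,dr\,ds.
\end{equation*}
Integrating out $W$ first yields $E\exp(\lambda V_{t,x})=E\exp\bigl(\tfrac12\lambda^2\alpha_H J(X)\bigr)$, so \eref{e21} is equivalent to $E\exp(cJ(X))<\infty$ for every $c>0$. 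Because the self-similarity $X_{\tau\cdot}\stackrel{d}{=}\tau^{1/\alpha}X_\cdot$ gives $J(X)\stackrel{d}{=}t^{\kappa+1}J_1$ with $J_1:=J(X)_{[0,1]}$, it is sufficient to prove $E\exp(\mu J_1)<\infty$ for every $\mu>0$, or equivalently a sub-factorial moment bound $EJ_1^m\le C^m\,m!^{\beta}$ for some $\beta<1$.

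To set the stage for the moment bound I would introduce $F_{[a,b]}:=\sqrt{J(X)_{[a,b]}}$ and observe that $F_{[a,b]}=\|g\|_{L^2([a,b]^2)}$ with $g(r,s):=|r-s|^{H_0-1}\prod_i|X_r^i-X_s^i|^{H_i-1}$. Partitioning $[0,1]^2$ into the diagonal sub-squares $A_i\times A_i$, with $A_i:=[(i-1)/n,i/n]$, and the off-diagonal rectangles $A_i\times A_j$, $i\ne j$, Minkowski's inequality applied to $\|g\|_{L^2([0,1]^2)}$ yields the sub-additive decomposition
\begin{equation*}
F_{[0,1]}\;\le\;\sum_{i=1}^n F_{A_i}\;+\;\sum_{i\ne j}\sqrt{\xi_{i,j}},\qquad \xi_{i,j}\,:=\int_{A_i}\!\!\int_{A_j}g^2.
\end{equation*}
By the independent and stationary increments of the symmetric $\alpha$-stable motion, the diagonal pieces $\{F_{A_i}\}_{i=1}^n$ are i.i.d., and by self-similarity each $F_{A_i}\stackrel{d}{=}n^{-(\kappa+1)/2}F_{[0,1]}$. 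The off-diagonal contributions for non-adjacent pairs sit away from the singularity of $|r-s|^{2H_0-2}$ and can be absorbed into the diagonal sum by a further Cauchy--Schwarz estimate at the cost of a bounded multiplicative constant; adjacent off-diagonal blocks are handled by first coarsening the partition so as to keep the singular diagonal of $[0,1]^2$ inside the ``diagonal'' blocks.

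The main obstacle is to turn this decomposition into the required moment bound on $J_1=F_{[0,1]}^2$. A direct H\"older factorisation of $E J_1^m$ fails at once: the standard $\alpha$-stable variable $\xi$ has $E|\xi|^p<\infty$ only for $p<\alpha$, so the product $\prod_{k=1}^m|X_{r_k}^i-X_{s_k}^i|^{m(2H_i-2)}$ generated by a H\"older split across the $m$ pairs of integration variables loses integrability as soon as $m(2-2H_i)\ge\alpha$. The large-deviation technique of \cite{chen} circumvents this by iterating the sub-additive/self-similar decomposition above \emph{at the moment level}: taking $(2m)$-th powers in the displayed Minkowski bound and exploiting the independence of $\{F_{A_i}\}$ together with the rescaling $F_{A_i}\stackrel{d}{=}n^{-(\kappa+1)/2}F_{[0,1]}$, one obtains a recursive inequality on $EF_{[0,1]}^{2m}$ which, combined with the positivity of $\kappa$, forces sub-factorial growth of $EJ_1^m$. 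Taylor expansion then gives $E\exp(\mu J_1)<\infty$ for every $\mu>0$, and the reductions of the first paragraph translate this into \eref{e21} for every $\lambda\in\R$.
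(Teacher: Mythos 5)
Your overall strategy---integrate out $W$ using the conditional Gaussianity of $V_{t,x}$, rescale to reduce to exponential moments of $J_1$, establish sub-additivity of $\sqrt{J}$, and invoke the large-deviation/moment machinery of \cite{chen}---is exactly the paper's route. However, there is a genuine gap at the sub-additivity step, which is the crux of the whole argument. Your partition of $[0,1]^2$ into blocks $A_i\times A_j$ leaves you with the off-diagonal terms $\xi_{i,j}=\int_{A_i}\int_{A_j}|r-s|^{2H_0-2}\prod_i|X_r^i-X_s^i|^{2H_i-2}\,dr\,ds$, and your justification for absorbing them (``they sit away from the singularity of $|r-s|^{2H_0-2}$'' plus ``a further Cauchy--Schwarz estimate at the cost of a bounded multiplicative constant'') does not work: the dangerous singularity on an off-diagonal block is not the temporal one but the spatial one $\prod_i|X_r^i-X_s^i|^{2H_i-2}$, which blows up whenever the path returns near a previous position, no matter how far apart $r$ and $s$ are. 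There is no pathwise bound of $\xi_{i,j}$ by the diagonal terms with a universal constant. The paper's way out is the factorization identity
\begin{equation*}
|s-r|^{2H_0-2}=C_0\int_{\R}|s-u|^{\frac{2H_0-3}{2}}|r-u|^{\frac{2H_0-3}{2}}\,du,\qquad
|X_s^i-X_r^i|^{2H_i-2}=C_i\int_{\R}|X_s^i-x|^{\frac{2H_i-3}{2}}|X_r^i-x|^{\frac{2H_i-3}{2}}\,dx,
\end{equation*}
which exhibits $Z_t=\sqrt{J_{[0,t]}}$ as the $L^2(\R^{d+1})$-norm of a function that is \emph{additive} in $t$; the triangle inequality in $L^2(\R^{d+1})$ (equivalently, Cauchy--Schwarz in the auxiliary variables $(u,x)$, which gives $\xi_{i,j}\le F_{A_i}F_{A_j}$ exactly) then yields $Z_{t_1+t_2}\le Z_{t_1}+\widetilde Z_{t_2}$ with $\widetilde Z_{t_2}$ independent of the past and equidistributed with $Z_{t_2}$. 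Without this identity your decomposition is not established, and with it your block decomposition becomes redundant.

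A secondary issue: you assert that the resulting recursive inequality on $EF_{[0,1]}^{2m}$ ``forces sub-factorial growth'' without carrying out the multinomial-expansion argument; this is precisely the content of Theorem 1.3.5 of \cite{chen}, which the paper cites rather than reproves, together with the scaling relation $Z_t\stackrel{d}{=}t^{\gamma/2}Z_1$ and a Chebyshev tail estimate giving $\P\{Z_1\ge t^{(2-\gamma)/2}\}\le e^{-Ct}$ and hence $E\exp\{\theta Z_1^{2/(2-\gamma)}\}<\infty$; since $1<\gamma<2$ this is stronger than $E\exp\{\lambda Z_1^2\}<\infty$ for all $\lambda$. You should either cite that theorem or supply the recursion; as written, the two load-bearing steps of the proof are both left unproved.
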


\begin{proof}
The proof will be done in several steps.

\medskip \noindent \textbf{Step 1} \ By (\ref{e.2.14}),
the fact that $V_{t,x}$ is Gaussian conditional on $X$ and Fubini's  theorem, we obtain
\begin{eqnarray*}
Ee^{\lambda V_{t,x}}
&=&E ^X E\left[ e^{\lambda V_{t,x}} | X\right] =E ^X    e^{{\rm Var} \left(\lambda V_{t,x}
|X\right)} \\
&=& E^{X}\exp \left( \frac{\lambda ^{2}}{2}\alpha
_{H}\int_{0}^{t}\int_{0}^{t}|s-r|^{2H_{0}-2}%
\prod_{i=1}^{d}|X_{s}^{i}-X_{r}^{i}|^{2H_{i}-2}dsdr\right).
\end{eqnarray*}
Define
\begin{equation}
Y_t=\int_{0}^{t}\int_{0}^{t}|s-r|^{2H_{0}-2}%
\prod_{i=1}^{d}|X_{s}^{i}-X_{r}^{i}|^{2H_{i}-2}dsdr.  \label{e9}
\end{equation}

The scaling property of the stable L\'evy motion yields
\begin{equation}\label{scaling}
Y_t \overset{d}{=} t^{\kappa+1}Y_1,
\end{equation}
where
$\kappa $ has been defined in (\ref{kappa}).
Hence
\begin{equation}
Ee^{\lambda V_{t,x}}=Ee^{\mu Y_1},  \label{e9a}
\end{equation}%
where $\mu =\frac{\lambda ^{2}}{2}\alpha _{H}t^{\kappa +1}$. Then, it suffices to show that the random variable $Y_1$ has exponential moments of all orders.\\
\medskip \noindent
\textbf{Step 2}  Let $$Z_t=Y_t^{\frac12}=\left(\int_{0}^{t}\int_{0}^{t}|s-r|^{2H_{0}-2}%
\prod_{i=1}^{d}|X_{s}^{i}-X_{r}^{i}|^{2H_{i}-2}dsdr\right)^\frac{1}{2}.$$
We use the the identity
$$
\vert s-r\vert^{2H_0-2}=C_0\int_{\R}\vert s-u\vert^{2H_0-3\over 2}
\vert r-u\vert^{2H_0-3\over 2}du
$$
$$
\vert X_s^i-X_r^i\vert^{2H_i-2}=C_i\int_{\R}\vert X_s^i-x\vert^{2H_i-3\over 2}
\vert X_r^i-x\vert^{2H_i-3\over 2}dx\hskip.2in i=1,\cdots, d
$$
where $C_i$ only depends on $H_i$ for $i=0,1,\cdots,d.$\\
We have
$$
Z_t=\bigg(\int_{\R\times\R^d}\xi_t^2(u, x_1,\cdots, x_d)
du dx_1\cdots dx_d\bigg)^{1/2}
$$
where
$$
\xi_t(u, x_1,\cdots, x_d)=\Big(\prod_{i=0}^dC_i\Big)
\int_0^t\vert s-u\vert^{2H_0-3\over 2}\prod_{i=1}^d
\vert X_s^i-x_i\vert^{2H_i-3\over 2}ds
$$
For $t_1, t_2>0$, by triangular inequality
$$
Z_{t_1+t_2}\le Z_{t_1}+\bigg(\int_{\R\times\R^d}
\Big[\xi_{t_1+t_2}(u, x_1,\cdots, x_d)-\xi_{t_1}(u, x_1,\cdots, x_d)\Big]^2
du dx_1\cdots dx_d\bigg)^{1/2}
$$
Write $\widetilde{X}_s^i=X_{t_1+s}^i-X_{t_1}^i$.
\begin{align*}
&\xi_{t_1+t_2}(u, x_1,\cdots, x_d)-\xi_{t_1}(u, x_1,\cdots, x_d)\cr
=&\Big(\prod_{i=0}^dC_i\Big)\int_{t_1}^{t_1+t_2}
\vert s-u\vert^{2H_0-3\over 2}\prod_{i=1}^d
\vert X_s^i-x_i\vert^{2H_i-3\over 2}ds\cr
=&\Big(\prod_{i=0}^dC_i\Big)\int_{0}^{t_2}
\vert s+t_1-u\vert^{2H_0-3\over 2}\prod_{i=1}^d
\vert \widetilde{X}_{s}^i+X_{t_1}^i-x_i\vert^{2H_i-3\over 2}ds\cr
\end{align*}
By translation invariance,

\begin{align*}
&\int_{\R\times\R^d}
\Big[\xi_{t_1+t_2}(u, x_1,\cdots, x_d)-\xi_{t_1}(u, x_1,\cdots, x_d)\Big]^2\cr
=&\int_{\R\times\R^d}\widetilde{\xi}_{t_2}^2(u, x_1,\cdots, x_d)
du dx_1\cdots dx_d\cr
\end{align*}
where
$$
\widetilde{\xi}_{t_2}^2(u, x_1,\cdots, x_d)
=\Big(\prod_{i=0}^dC_i\Big)\int_{0}^{t_2}
\vert s-u\vert^{2H_0-3\over 2}\prod_{i=1}^d
\vert \widetilde{X}_{s}^i-x_i\vert^{2H_i-3\over 2}ds
$$
Therefore, the process $Z_t$ is sub-additive, which means that for any
$t_1,t_2>0$, $Z_{t_1+t_2}\le Z_{t_1}+\widetilde{Z}_{t_2}$, where
$\widetilde{Z}_{t_2}$ is independent of $\{Z_s;\hskip.1in s\le t_1\}$
and has a distribution same as $Z_{t_2}$.\\
\textbf{Step 3 }
Note that $Z_t\ge 0$ is non-decreasing and path-wise continuous.
By Theorem 1.3.5 in \cite{chen}, for any $\theta>0$ and $t>0$
$$
E\exp\big\{\theta Z_t\big\}<\infty
$$
and
$$
\lim_{t\to\infty}{1\over t}\log E\exp\big\{\theta Z_t\big\}=\Psi(\theta)
$$
where $0\le\Psi(\theta)<\infty$.
By the scaling property (\ref{scaling}), and the fact $Z_t=Y_t^{\frac12}$, we have  $Z_t\buildrel d\over =t^{\gamma/2}Z_1$
with \begin{equation}\label{gamma}
\gamma =\kappa+1=2H_0+{1\over\alpha}\sum_{i=1}^d(2H_i-2).
\end{equation}
Therefore
$$
\lim_{t\to\infty}{1\over t}\log E\exp\big\{\theta Z_t\big\}=\lim_{t\to\infty}{1\over {t\theta^\frac 2\gamma}}\log E\exp\big\{ Z_{t\theta^\frac 2\gamma}\big\}\theta^\frac 2\gamma=\Psi(1)\theta^\frac 2\gamma
$$

Using Chebyshev inequality, we have
 $$e^{\theta t}\P\big\{Z_t\ge t\big\}\le Ee^{\theta Z_t},$$
and hence  $$\theta t+\log\P\big\{Z_t\ge t\big\}\le \log Ee^{\theta Z_t}.$$
This implies
 $$\limsup_{t\to \infty}{1\over{t}} \log\P\big\{Z_t\ge t\big\}\le\lim_{t\to\infty} {1\over t} \log Ee^{\theta Z_t}-\theta=\Psi(\theta)-\theta=\theta^\frac 2\gamma\Psi(1)-\theta$$
and
$$\limsup_{t\to \infty}{1\over{t}} \log\P\big\{Z_t\ge t\big\}\le \min_{\theta>0}\left\{\theta^\frac 2\gamma\Psi(1)-\theta\right\},$$
where the right hand side is strictly negative when we choose $\theta$ sufficiently closed to $0$ (noting that $\gamma\in(1,2)$). Hence there exists $a>0$ such that
$$\limsup_{t\to \infty}\frac 1t\log\P\big\{Z_t\ge t\big\}\le -a.$$
Therefore there exists $N>0$, such that when $t>N$
$$
\P\Big\{Z_1\ge t^{2-\gamma\over 2}\Big\}\le \exp\{-\frac a2t\}.
$$

Since for a random variable $X\ge0,$ $$Ee^X=E\int_0^X e^ydy+1=\int_0^\infty \P\big\{X\ge y\}e^ydy+1,$$ we have

\begin{align*}
& E e^{\theta Z_1^\frac{2}{2-r}}\cr
=& \int_0^\infty \P\big\{\theta Z_1^\frac 2{2-r}\ge y\}e^ydy+1\cr
\le& \int_0^N e^ydy + \int_N^\infty \P\big\{\theta Z_1^\frac 2{2-r}\ge y\}e^ydy+1\cr
\le&\sum_{K=N}^{\infty}\P\big\{\theta Z_1^\frac 2{2-r}\ge K\}e^{K+1}+e^N\cr
\le&\sum_{K=N}^{\infty}e^{-\frac {aK} {2\theta} }e^{K+1}+e^N\cr
\end{align*}
This give the critical integrability
$$
E\exp\Big\{\theta Z_1^{2\over 2-\gamma}\Big\}<\infty,
$$
for  $\theta\in(0,\dfrac a2)$, which implies that $E\exp\Big\{\lambda Z_1^{2}\Big\}<\infty$ for all $\lambda>0$ since $1<\gamma<2.$

\end{proof}

\begin{proposition}\label{best}
Suppose $2H_{0}+\frac{1}{\alpha}\sum_{i=1}^d (2H_{i}-2)\le1$. Then, conditionally to $X$
the family  $V_{t,x}^{{\varepsilon },{\delta }}$ does not converge
in probability as $\varepsilon$ and $\delta$ tend to zero, for
a non-zero set of trajectories of $X$.
\end{proposition}

\begin{proof}
We will prove by contradiction. Suppose
$V_{t,x}^{{\varepsilon },{\delta }}$ converges to $V_{t,x}$
in probability as $\varepsilon$ and $\delta$ tend to zero given $X$ for
 almost all trajectories of $X$. Since given $X$, $V_{t,x}^{{\varepsilon },{\delta }}$ is Gaussian, so
$E^W (V_{t,x}^2)=\lim\limits_{\varepsilon\to0,\delta\to0}
E^W(V_{t,x}^{\varepsilon,\delta})^2=\int_0^t\int_0^t|s-r|^{2H_0-2}
\prod_{i=1}^d|X_s^i-X_r^i|^{2H_i-2}dsdr<\infty$ for almost
all trajectories of $X$. As in the Step 2 in the proof of the
Theorem \ref{exp}, we can prove that
$\left(\int_0^t\int_0^t|s-r|^{2H_0-2}\prod_{i=1}^d|X_s^i-X_r^i|^{2H_i-2}dsdr\right)^\frac12$
is sub-additive and it's also finite almost surely, hence we have
$$E\exp \left(\int_0^t\int_0^t|s-r|^{2H_0-2}\prod_{i=1}^d
|X_s^i-X_r^i|^{2H_i-2}dsdr\right)^\frac12<\infty,$$ implying that
$E\int_0^t\int_0^t|s-r|^{2H_0-2}\prod_{i=1}^d|X_s^i-X_r^i|^{2H_i-2}dsdr<\infty$
which is a contradiction.
\end{proof}

\setcounter{equation}{0}

\section{Feynman-Kac formula}

We recall that $W$ is a fractional Brownian sheet on
$\mathbb{R}_{+}\times
\mathbb{R}^{d}$ with Hurst parameters $(H_{0},H_{1},\dots ,H_{d})$ where $%
H_{i}\in (\frac{1}{2},1)$ for $i=0,\dots ,d$. For any $\varepsilon $, $%
\delta >0$ we define%
\begin{equation}  \label{pot}
\dot{W}^{\epsilon ,\delta
}(t,x):=\int_{0}^{t}\int_{\mathbb{R}^{d}}\varphi _{\delta
}(t-s)p_{\epsilon }(x-y)W(ds,dy).
\end{equation}%
%In order to give a notion of solution for the heat equation with
%fractional noise (\ref{e.1.1}) we need the following definition of
%the Stratonovitch integral, which is  equivalent to that of
%Russo-Vallois  in \cite{RV}.
We shall follow the analogous ideas  in \cite{hns} to define the Stratonovich integral  
with respect to the fractional noises and to define the concept of  solution to
the stochastic partial differential equations.  First we give the following two definitions which are  the same as    the  definitions 4.1 and 4.2 in \cite{hns}.  

\begin{definition}
\label{def2} Given a random field $v=\{v(t,x),t\geq 0,x\in
\mathbb{R}^d\}$ such that
\begin{equation*}
\int_{0}^{T}\int_{\mathbb{R}^d}|v(t,x)|dxdt<\infty
\end{equation*}
almost surely for all $T>0$, the Stratonovitch integral $\int_{0}^{T}\int_{%
\mathbb{R}^d}v(t,x)W(dt,dx)$ is defined as the following limit in
probability if it exists%
\begin{equation*}
\lim_{\epsilon ,\delta \downarrow 0}\int_{0}^{T}\int_{\mathbb{R}^d}v(t,x)%
\dot{W}^{\epsilon ,\delta }(t,x)dxdt.
\end{equation*}
\end{definition}

\begin{remark} When $W$ is the standard 
Brownian motion, similar definitions of Stratonovich integrals
were studied by a great number of authors (see  e.g. \cite{IW}, chapter 6.
See also \cite{HM} and the references therein for a more 
general approximations.)  For multiple parameter fractional Brownian motions,
our definition is exactly the same as in \cite{hns}.  If the temporal Hurst parameter
is less than $1/2$, we refer to \cite{hln} for a detailed study.  
\end{remark} 

We are going to consider the following notion of solution for Equation (\ref{e.1.1}).

\begin{definition}
\label{def} A random field $u=\{u(t,x),t\geq 0,x\in
\mathbb{R}^d\}$ is a
weak solution to Equation (\ref{e.1.1}) 
\begin{description}
\item{(i)}\quad For any $t>0$ and any compact domain $D$ of $\RR^d$,
\[
  \int_0^t \int_D \EE\left(|u(s, x)|\right) dsdx<\infty\,.
\]
\item{(ii)}\quad 
For any $C^{\infty }$ function $%
\varphi $ with compact support on $\mathbb{R}^d$, we have
\begin{eqnarray*}
\int_{\mathbb{R}^d}u(t,x)\varphi
(x)dx&=&\int_{\mathbb{R}^d}f(x)\varphi (x)dx - \frac 12
\int_{0}^{t}\int_{\mathbb{R}^d}u(s,x) (-\Delta)^\frac{\alpha}{2}
\varphi (x)dxds
\\
&& +\int_{0}^{t}\int_{\mathbb{R}^d}u(s,x)\varphi (x)W(ds,dx),
\end{eqnarray*}%
almost surely, for all $t\ge 0$, where the last term is a
Stratonovitch stochastic integral in the sense of Definition
\ref{def2}.
\end{description}
\end{definition}

The following is the main result of this section.

\begin{theorem}  \label{th1}
Suppose that $2H_0+\frac{1}{\alpha} \sum_{i=1}^d (2H_{i}-2)>1$ and
that $f$ is a bounded
measurable function. Then process%
\begin{equation}  \label{utx}
u(t,x)=E^{X}\left( f(X_{t}^{x})\exp \left( \int_{0}^{t}\int_{\mathbb{R}%
^d}\delta (X_{t-r}^{x}-y)W(dr,dy)\right) \right)
\end{equation}%
is a weak solution to Equation (\ref{e.1.1}).
\end{theorem}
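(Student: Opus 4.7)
The plan is to regularize, solve a classical PDE for the smoothed equation, and pass to the limit. Define the approximate field
\begin{equation*}
u^{\epsilon,\delta}(t,x) := E^{X}\!\bigl[f(X_t^x)\exp(V_{t,x}^{\epsilon,\delta})\bigr].
\end{equation*}
For fixed $\omega$ the mollified potential $\dot W^{\epsilon,\delta}(t,x)$ in (\ref{pot}) is bounded and continuous in $(t,x)$, so the classical (deterministic) Feynman-Kac formula for the symmetric $\alpha$-stable L\'evy motion $X$ identifies $u^{\epsilon,\delta}$ as the classical solution of
\begin{equation*}
\partial_t u^{\epsilon,\delta} = -(-\Delta)^{\alpha/2} u^{\epsilon,\delta} + u^{\epsilon,\delta}\,\dot W^{\epsilon,\delta}, \qquad u^{\epsilon,\delta}(0,\cdot)=f;
\end{equation*}
a Fubini computation matches $\int_0^t \dot W^{\epsilon,\delta}(t-r,X_r^x)\,dr$ with $V_{t,x}^{\epsilon,\delta}$ from (\ref{e3}). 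Testing against $\varphi\in C_c^\infty$, integrating in time and invoking self-adjointness of $(-\Delta)^{\alpha/2}$ yields, for each $\omega$,
\begin{equation*}
\int u^{\epsilon,\delta}(t,x)\varphi\,dx = \int f\varphi\,dx - \tfrac12\!\int_0^t\!\!\int u^{\epsilon,\delta}(-\Delta)^{\alpha/2}\varphi\,dxds + \int_0^t\!\!\int u^{\epsilon,\delta}\varphi\,\dot W^{\epsilon,\delta}\,dxds.
\end{equation*}

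The next step is to pass to the limit as $\epsilon,\delta\downarrow 0$. Combining Theorem \ref{teo1} with the exponential moment bound of Theorem \ref{exp}, and using the uniform domination $\|A_{t,x}^{\epsilon,\delta}\|_{\mathcal{H}}^{2}\le C\,Y$ afforded by Lemmas \ref{lemma2}--\ref{lemma3} in combination with Theorem \ref{exp}, one upgrades the $L^2$-convergence $V_{t,x}^{\epsilon,\delta}\to V_{t,x}$ to $L^p(\Omega)$-convergence of $\exp V_{t,x}^{\epsilon,\delta}\to \exp V_{t,x}$ for every $p<\infty$. Boundedness of $f$ then gives $u^{\epsilon,\delta}(t,x)\to u(t,x)$ in $L^p(\Omega)$, uniformly on compact $(t,x)$-sets, whence the left-hand side and the fractional-Laplacian term converge. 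Everything reduces to showing that the last term converges in probability to the Stratonovich integral $\int_0^t\!\int u(s,x)\varphi(x)W(ds,dx)$ of Definition \ref{def2}.

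The main obstacle is exactly this convergence of the stochastic term, because the integrand $u^{\epsilon,\delta}$ itself depends on $(\epsilon,\delta)$ and the Stratonovich integral is only defined through such a limit. The plan is to use the Malliavin identity (\ref{a5}), pointwise in $(s,x)$, to decompose
\begin{equation*}
u^{\epsilon,\delta}(s,x)\dot W^{\epsilon,\delta}(s,x) = \delta\bigl(u^{\epsilon,\delta}(s,x)K_{s,x}^{\epsilon,\delta}\bigr) + \bigl\langle Du^{\epsilon,\delta}(s,x),K_{s,x}^{\epsilon,\delta}\bigr\rangle_{\mathcal{H}},
\end{equation*}
where $K_{s,x}^{\epsilon,\delta}(r,y)=\varphi_\delta(s-r)p_\epsilon(x-y)$. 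Malliavin-differentiating (\ref{utx}) produces $D_{r,y}u^{\epsilon,\delta}(s,x) = E^X[f(X_s^x)e^{V_{s,x}^{\epsilon,\delta}}A_{s,x}^{\epsilon,\delta}(r,y)]$, so the trace term becomes an explicit multiple integral against the covariance kernel $|s-r|^{2H_0-2}\prod_i|y_i-y_i'|^{2H_i-2}$; under the standing hypothesis and with Theorem \ref{exp} controlling the exponential factor, it is uniformly bounded in $(\epsilon,\delta)$ and converges by dominated convergence after testing by $\varphi$ and integrating in $(s,x)$. The Skorohod piece is handled by closability of $\delta$, since its integrand converges in $L^2(\Omega;\mathcal{H})$ once $\varphi$ is applied and $(s,x)$ are integrated out. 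Performing the same decomposition with $u$ in place of $u^{\epsilon,\delta}$ and subtracting shows the two limits coincide, simultaneously proving existence of the Stratonovich integral in Definition \ref{def2} and the weak formulation of Definition \ref{def} for $u$.
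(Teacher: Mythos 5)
Your overall strategy coincides with the paper's: mollify the noise, identify the approximating field $u^{\epsilon,\delta}$ through the classical Feynman--Kac formula and a Fubini argument matching the exponent with $V^{\epsilon,\delta}_{t,x}$, pass to the limit in the deterministic terms using the uniform exponential integrability, and reduce the problem to showing that $\int_0^t\int_{\mathbb{R}^d}(u^{\epsilon,\delta}(s,x)-u(s,x))\varphi(x)\dot W^{\epsilon,\delta}(s,x)\,ds\,dx\to 0$, attacked via the decomposition (\ref{a5}) into a divergence term plus a trace term. Your treatment of the trace term (uniform domination by $C\int_0^s r^{2H_0-2}\prod_{i=1}^d|X^i_r|^{2H_i-2}dr$ and dominated convergence, cf.\ Lemma \ref{lemma4}) matches the paper's Step 5, and your subtract-the-two-decompositions device is the same as the paper's direct decomposition of the difference.

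The genuine gap is in the divergence (Skorohod) piece, which you dispose of ``by closability of $\delta$, since its integrand converges in $L^2(\Omega;\mathcal{H})$.'' Closability does not give this: a closed operator is not continuous on its domain, so $\phi^{\epsilon,\delta}\to 0$ in $L^2(\Omega;\mathcal{H})$ alone does not imply $\delta(\phi^{\epsilon,\delta})\to 0$ in $L^2(\Omega)$; closability would only identify the limit if you already knew the divergences converge. What is needed is the estimate $E[\delta(\phi^{\epsilon,\delta})^2]\le E\|\phi^{\epsilon,\delta}\|_{\mathcal{H}}^2+E\|D\phi^{\epsilon,\delta}\|_{\mathcal{H}\otimes\mathcal{H}}^2$ (the paper's (\ref{e.4.2})), and therefore, in addition to the $\mathcal{H}$-norm convergence, the convergence $E\|D\phi^{\epsilon,\delta}\|_{\mathcal{H}\otimes\mathcal{H}}^2\to 0$. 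That forces you to prove $u^{\epsilon,\delta}(s,x)\to u(s,x)$ in $\mathbb{D}^{1,2}$, with bounds uniform in $(s,x)$ and in $(\epsilon,\delta)$; in the paper this is a separate nontrivial step (Step 4), where $E\langle D u^{\epsilon,\delta}(t,x),D u^{\epsilon',\delta'}(t,x)\rangle_{\mathcal{H}}$ is computed with two independent stable motions and controlled by the same exponential-integrability theorem applied to the coupled double integrals, see (\ref{e.4.12a})--(\ref{b1}). You write down $Du^{\epsilon,\delta}$ for the trace term but never establish this $\mathbb{D}^{1,2}$ convergence, so the Skorohod part is unjustified as stated. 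A minor further point: the claimed uniform convergence of $u^{\epsilon,\delta}\to u$ on compact $(t,x)$-sets is neither proved nor needed; pointwise $L^p$ convergence together with the uniform bound (\ref{a9}) and dominated convergence suffices for the deterministic terms.
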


\begin{proof}
Consider the approximation of the Equation (\ref{e.1.1}) given by
the following heat equation with a random potential
\begin{equation}
\begin{cases}
\dfrac{\partial u^{\varepsilon ,\delta }}{\partial t}= -(-\Delta)^{\frac \alpha2}u^{\varepsilon ,\delta }+u^{\varepsilon ,\delta
}\dot{W}_{t,x}^{\epsilon
,\delta } \\
u^{\varepsilon ,\delta }(0,x)=f(x).%
\end{cases}
\label{approx}
\end{equation}
From the classical Feynman-Kac formula (see for example \cite{A}, Theorem 
6.7.9) we know that%
\begin{equation*}
u^{\varepsilon ,\delta }(t,x) =E^{X}\left( f(X_{t}^{x})\exp \left(
\int_{0}^{t}\dot{W}^{\epsilon ,\delta }(t-s,X_{s}^{x})ds\right)
\right),
\end{equation*}%
where $X^x$ is a $d$-dimensional symmetric $\alpha$-stable L\'evy motion
independent of $W$
starting at $x$. By Fubini's theorem we can write%
\begin{eqnarray*}
\int_{0}^{t}\dot{W}^{\varepsilon ,\delta }(t-s,X_{s}^{x})ds
&=&\int_{0}^{t}\left( \int_{0}^{t}\int_{\mathbb{R}^d}\varphi
_{\delta
}(t-s-r)p_{\varepsilon }(X_{s}^{x}-y)W(dr,dy)\right) ds \\
&=&\int_{0}^{t} \int_{\mathbb{R}^d} \left( \int_{0}^{t}\varphi
_{\delta
}(t-s-r)p_{\varepsilon }(X_{s}^{x}-y)ds\right) W(dr,dy) \\
&=&V_{t,x}^{\varepsilon ,\delta },
\end{eqnarray*}%
where $V_{t,x}^{\varepsilon ,\delta }$ is defined in (\ref{e3}). Therefore,%
\begin{equation*}
u ^{\varepsilon ,\delta }(t,x)=E^{X}\left( f(X_{t}^{x})\exp \left(
V_{t,x}^{\varepsilon ,\delta }\right) \right) .
\end{equation*}

\medskip \noindent \textbf{Step 1 }We will prove that for any $x\in \mathbb{R%
}^d$ and any $t>0$, we have%
\begin{equation}
\lim_{\varepsilon ,\delta \downarrow 0}\ \ E^W |u ^{\varepsilon
,\delta }(t,x)-u(t,x)|^{p}=0,  \label{a1}
\end{equation}%
for all $p\geq 2$, where $u(t,x)$ is defined in (\ref{utx}). Notice that%
\begin{eqnarray*}
E^W |u ^{\varepsilon ,\delta }(t,x)-u(t,x)|^{p} &=&E^{W}\left|
E^{B}\left( f(B_{t}^{x})\left[ \exp \left( V_{t,x}^{\varepsilon
,\delta }\right) -\exp
\left( V_{t,x}\right) \right] \right) \right| ^{p} \\
&\leq &\left\| f\right\| _{\infty }^{p}E\left| \exp \left(
V_{t,x}^{\varepsilon ,\delta }\right) -\exp \left( V_{t,x}\right)
\right| ^{p},
\end{eqnarray*}%
where $V_{t,x}$ is defined in (\ref{e.3.4}). Since $\exp \left(
V_{t,x}^{\varepsilon ,\delta }\right) $ converges to $\exp \left(
V_{t,x}\right) $ in probability by Theorem \ref{teo1}, to show
(\ref{a1}) it suffices to prove that for any $\lambda \in
\mathbb{R}$
\begin{equation}
\sup_{\epsilon ,\delta }E\exp \left( \lambda V_{t,x}^{\varepsilon
,\delta }\right) <\infty.  \label{a2}
\end{equation}%
The estimate (\ref{a2}) follows from (\ref{e3}), (\ref{e5}), and (\ref{e21}%
):
\begin{eqnarray}
&& E \exp \left( \lambda V_{t,x}^{\varepsilon ,\delta }\right) = E
\exp \left( \frac{\lambda ^{2}}{2}\left\| A_{t,x}^{\varepsilon
,\delta }\right\|
_{\mathcal{H}}^{2}\right)  \notag \\
&& \qquad \leq E \exp \left( \frac{\lambda ^{2}}{2}C\int_{0}^{t}%
\int_{0}^{t}|r -s |^{2H_{0}-2} \prod_{i=1}^d\left| X^i_{r}-X^i_{s
}\right|
^{2H_{i}-2}dr ds\right)  \notag \\
&&\qquad <\infty.  \label{a2a}
\end{eqnarray}

\medskip \noindent \textbf{Step 2 } \ Now we prove that $u(t,x)$ is a weak
solution to Equation (\ref{e.1.1}) in the sense of Definition
\ref{def}. Suppose $\varphi $ is a smooth function with compact
support. We know that,
\begin{eqnarray}
\int_{\mathbb{R}^d}u^{\varepsilon ,\delta }(t,x)\varphi (x)dx &=&\int_{%
\mathbb{R}^d}f(x)\varphi (x)dx- \frac 12
\int_{0}^{t}\int_{\mathbb{R}^d}u
^{\varepsilon ,\delta } (s,x) (-\Delta)^\frac{\alpha}{2} \varphi (x)dxds  \notag \\
&&+\int_{0}^{t}\int_{\mathbb{R}^d}u ^{\varepsilon ,\delta }(s,x)\varphi (x)%
\dot{W}^{\varepsilon ,\delta }(s,x)dsdx.  \label{a3}
\end{eqnarray}
From Step 1, we know that $\sup\limits_{\varepsilon,\delta}\sup\limits_{s\in[0,T], x\in\R^d}E|u^{\varepsilon,\delta}(s,x)-u(s,x)|<\infty$. Then by dominant convergence theorem, we have
\[\lim_{\varepsilon,\delta}\int_0^t\int_{\R^d}u^{\varepsilon,\delta}(s,x)(-\Delta)^{\frac\alpha2}\varphi(x)dx=\int_0^t\int_{\R^d}u(s,x)(-\Delta)^{\frac\alpha2}\varphi(x)dx.\]
Therefore, it suffices to prove that
\begin{equation*}
\lim_{\varepsilon ,\delta \downarrow
0}\int_{0}^{t}\int_{\mathbb{R}^d}u ^{\varepsilon ,\delta
}(s,x)\varphi (x)\dot{W}^{\varepsilon ,\delta
}(s,x)dsdx=\int_{0}^{t}\int_{\mathbb{R}^d}u(s,x)\varphi
(x)W(ds,dx),
\end{equation*}
in probability. From (\ref{a3}) and (\ref{a1}) it follows that $
\int_{0}^{t}\int_{\mathbb{R}^d}u ^{\epsilon ,\delta }(s,x)\varphi (x)\dot{W}%
^{\epsilon ,\delta }(s,x)dsdx$ converges in $L^{2}$ to the random variable%
\begin{equation*}
G=\int_{\mathbb{R}^d}u(t,x)\varphi
(x)dx-\int_{\mathbb{R}^d}f(x)\varphi (x)dx+ \frac 12
\int_{0}^{t}\int_{\mathbb{R}^d}u(t,x)(-\Delta)^\frac{\alpha}{2}
\varphi (x)dxds
\end{equation*}
as $\varepsilon $ and $\delta$ tend to zero. Hence, if
\begin{equation*}
B_{\varepsilon ,\delta }=\int_{0}^{t}\int_{\mathbb{R}^d}(u
^{\epsilon ,\delta }(s,x)-u(s,x))\varphi (x)\dot{W}^{\varepsilon
,\delta }(s,x)dsdx
\end{equation*}%
converges in $L^{2}$ to zero, then
\begin{equation*}
\int_{0}^{t}\int_{\mathbb{R}^d}u(s,x)\varphi (x)\dot W^{{\varepsilon}, {%
\delta}}dsdx = \int_{0}^{t}\int_{\mathbb{R}^d}u^{{\varepsilon},
{\delta}}
(s,x)\varphi (x)\dot W^{{\varepsilon}, {\delta}}dsdx -B_{{\varepsilon}, {%
\delta}}
\end{equation*}%
converges to $G$ in $L^2$. Thus $u(s,x)\varphi (x)$ will be
Stratonovich integrable and
\begin{equation*}
\int_{0}^{t}\int_{\mathbb{R}^d}u(s,x)\varphi (x)W(ds,dx)=G\,,
\end{equation*}%
which will complete the proof. In order to show the convergence to zero of $
B_{ \varepsilon ,\delta },$ noting that both $u^{\varepsilon, \delta}(s,x)$ and $u(s,x)$ belong to   $\mathbb D^{1,2}$, we can express the product $(u
^{\varepsilon ,\delta }(s,x)-u(s,x))\dot{W}^{\varepsilon ,\delta
}(s,x)$ as the sum of a divergence integral plus a trace term (see
(\ref{a5})),
\begin{eqnarray*}
&& (u ^{\varepsilon ,\delta }(s,x)-u(s,x))\dot{W}^{\varepsilon
,\delta }(s,x)
\\
&& \quad = \int_{0}^{t}\int_{\mathbb{R}^d}(u ^{\varepsilon ,\delta
}(s,x)-u(s,x))\varphi _{\delta }(s-r)p_{\varepsilon }(x-z)\delta W_{r,z} \\
&&\qquad +\langle D(u ^{\varepsilon ,\delta }(s,x)-u(s,x)),\varphi
_{\delta }(s-\cdot )p_{\varepsilon }(x-\cdot )\rangle
_{\mathcal{H}}\,.
\end{eqnarray*}%
Then we have
\begin{eqnarray}
B_{\varepsilon ,\delta } &=& \int_{0}^{t}\int_{\mathbb{R}^d}\phi
_{r,z}^{\epsilon ,\delta }\delta W_{r,z}  \notag \\
&& +\int_{0}^{t}\int_{\mathbb{R}^d}\varphi (x)\langle D(u
^{\varepsilon ,\delta }(s,x)-u(s,x)),\varphi _{\delta }(s-\cdot
)p_{\varepsilon }(x-\cdot
)\rangle _{\mathcal{H}}dsdx  \notag \\
&=&B_{\varepsilon ,\delta }^{1}+B_{\varepsilon ,\delta }^{2},
\label{BB}
\end{eqnarray}%
where
\begin{equation*}
\phi _{r,z}^{\varepsilon ,\delta
}=\int_{0}^{t}\int_{\mathbb{R}^d}(u ^{\varepsilon ,\delta
}(s,x)-u(s,x))\varphi (x)\varphi _{\delta }(s-r)p_{\varepsilon
}(x-z)dsdx,
\end{equation*}%
and $\delta (\phi ^{\varepsilon ,\delta })= \int_{0}^{t}\int_{\mathbb{R}%
^d}\phi _{r,z}^{\varepsilon ,\delta }\delta W_{r,z}$ denotes the
divergence or the Skorohod integral of $\phi^{\varepsilon ,\delta
}$.

\medskip \noindent \textbf{Step 3 } \  For the term $B_{\varepsilon ,\delta
}^{1}$ we use the following $L^{2}$ estimate for the Skorohod
integral (see Proposition 1.3.1 in \cite{nualart})
\begin{equation}
E[(B_{\varepsilon ,\delta }^{1})^{2}]\leq E(\Vert \phi
^{\varepsilon ,\delta
}\Vert _{\mathcal{H}}^{2})+E(\Vert D\phi ^{\varepsilon ,\delta }\Vert _{%
\mathcal{H}\otimes \mathcal{H}}^{2})\,.  \label{e.4.2}
\end{equation}
The first term in (\ref{e.4.2}) is estimated as follows%
\begin{eqnarray}
E(\Vert \phi ^{\varepsilon ,\delta }\Vert _{\mathcal{H}}^{2}) &=&
\int_{0}^{t}\int_{\mathbb{R}^d}\int_{0}^{t}\int_{\mathbb{R}^d}E\left[
(u
^{\varepsilon ,\delta }(s,x)-u(s,x))(u ^{\varepsilon ,\delta }(r,y)-u(r,y))%
\right]  \notag \\
&& \!\!\!\!\!\!\!\!\!\!\!\!\!\!\!\!\!\!\!\!\!\!\times \varphi
(x)\varphi (y)\langle \varphi _{\delta }(s-\cdot )p_{\varepsilon
}(x-\cdot ),\varphi _{\delta }(r-\cdot )p_{\varepsilon }(y-\cdot
)\rangle _{\mathcal{H}}dsdxdrdy. \label{a7}
\end{eqnarray}%
Using  Lemmas \ref{lemma2} and \ref{lemma3} we can write%
\begin{eqnarray}
&&\langle \varphi _{\delta }(s-\cdot )p_{\epsilon }(x-\cdot
),\varphi _{\delta }(r-\cdot )p_{\varepsilon }(y-\cdot )\rangle
_{\mathcal{H}}  \notag
\\
&=&\alpha _H\left( \int_{[0,t]^{2}}\varphi _{\delta
}(s-\sigma)\varphi _{\delta }(r- \tau )|\sigma- \tau |^{2H_{0}-2}d
\sigma d\tau \right)  \notag
\\
&&\times \left( \int_{\mathbb{R}^{2d}}p_{\varepsilon }(x-z
)p_{\varepsilon
}(y-w) \prod_{i=1}^d |z_i -w_i|^{2H_{i}-2}dz dw\right)  \notag \\
&\leq &C|s-r|^{2H_{0}-2} \prod_{i=1}^d |x-y|^{2H_{i}-2},
\label{a8}
\end{eqnarray}%
for some constant $C>0$. As a consequence, the integrand on the
right-hand side of Equation (\ref{a7}) converges to zero as
$\varepsilon $ and $\delta $ tend to zero for any $s$, $r$, $x$,
$y$ due to (\ref{a1}). From (\ref{a2a})
we get%
\begin{equation}
\sup_{\varepsilon ,\delta }\sup_{x\in \mathbb{R}^d}\sup_{0\leq
s\leq t}E\left( u ^{\epsilon ,\delta }(s,x)\right) ^{2}\leq
\left\| f\right\| _{\infty }^{2}\sup_{\varepsilon ,\delta
}\sup_{x\in \mathbb{R}^d}\sup_{0\leq s\leq t}E\exp \left(
2V_{s,x}^{\varepsilon ,\delta }\right) <\infty . \label{a9}
\end{equation}%
Hence, from (\ref{a8}) and (\ref{a9}) we get that the integrand on
the
right-hand side of Equation (\ref{a7}) is bounded by $C|s-r|^{2H_{0}-2}%
\prod_{i=1}^d |x_i-y_i|^{2H_{i}-2}$, for some constant $C>0$.
Therefore, by dominated convergence we get that $E(\Vert \phi
^{\varepsilon ,\delta }\Vert _{\mathcal{H}}^{2})$ converges to
zero as $\varepsilon $ and $\delta $ tend to zero.

\medskip \noindent \textbf{Step 4 } \ On the other hand, we have
\begin{equation*}
D(u ^{\varepsilon ,\delta }(t,x))=E^{X}\left[ f(X^x_t)\exp
(V_{t,x}^{\varepsilon ,\delta })A_{t,x}^{\varepsilon ,\delta
}\right] ,
\end{equation*}%
where $A_{t,x}^{\varepsilon ,\delta }$ is defined in \ (\ref{e2}). Therefore,%
\begin{eqnarray}
&&E\langle D(u ^{\varepsilon ,\delta }(t,x)),D(u ^{\varepsilon
^{\prime
},\delta ^{\prime }}(t,x))\rangle _{\mathcal{H}}  \notag \\
&=&E^{W}E^{X}\Big( f(X_{t}^{1}+x)f(X_{t}^{2}+x)  \notag \\
&& \times \exp (V_{t,x}^{\varepsilon ,\delta
}(X^{1})+V_{t,x}^{\varepsilon ,\delta }(X^{2}))\langle
A_{t,x}^{\varepsilon ,\delta
}(X^{1}),A_{t,x}^{\varepsilon ^{\prime },\delta ^{\prime }}(X^{2})\rangle _{%
\mathcal{H}}\Big) ,  \label{e.4.12a}
\end{eqnarray}
where $X^{1}$ and $X^{2}$ are two independent $d$-dimensional
symmetric $\alpha$-stable L\'evy motions, and here $E^X$ denotes the expectation with respect to $(X^1, X^2)$%
. Then from the previous results it is easy to show that%
\begin{eqnarray}
&&\lim_{\varepsilon ,\delta \downarrow 0}E\langle D(u
^{\varepsilon ,\delta
}(t,x)),D(u ^{\varepsilon ^{\prime },\delta ^{\prime }}(t,x))\rangle _{%
\mathcal{H}}  \notag \\
&=&E \left[ f(X_{t}^{1}+x)f(X_{t}^{2}+x)\right.  \notag \\
&&\times \exp \left( \dfrac{\alpha _H}{2} \sum_{j,k=1}^{2}\int_{0}^{t}%
\int_{0}^{t}|s-r|^{2H_{0}-2} \prod_{i=1}^d
|X_{s}^{j,i}-X_{r}^{k,i}|^{2H_{i}-2}dsdr\right)  \notag \\
&&\left. \times \alpha _H \int_{0}^{t}\int_{0}^{t}|s-r|^{2H_{0}-2}
\prod_{i=1}^d |X_{s}^{1,i}-X_{r}^{2,i}|^{2H_{i}-2}dsdr\right] .
\label{b1}
\end{eqnarray}
This implies that $u ^{\varepsilon ,\delta }(t,x)$ converges in the space $%
\mathbb{D}^{1,2}$ to $u(t,x)$ as $\delta \downarrow 0$ and
$\varepsilon
\downarrow 0$. Letting ${\varepsilon}^{\prime}={\varepsilon}$ and ${\delta}%
^{\prime}={\delta}$ in \hbox{(\ref{e.4.12a})} and using the same
argument as for \hbox{(\ref{a9})}, we obtain
\begin{equation*}
\sup_{\varepsilon ,\delta }\sup_{x\in \mathbb{R}^d}\sup_{0\leq
s\leq
t}E\left\| D(u ^{\varepsilon ,\delta }(s,x))\right\| _{\mathcal{H}%
}^{2}<\infty .
\end{equation*}
Then
\begin{eqnarray*}
E\Vert D\phi ^{\varepsilon ,\delta }\Vert _{\mathcal{H}\otimes \mathcal{H}%
}^{2} &=&\int_{0}^{t}\int_{\mathbb{R}^d}\int_{0}^{t}\int_{\mathbb{R}%
}E\langle D(u ^{\varepsilon ,\delta }(s,x)-u(s,x)),D(u
^{\varepsilon ,\delta
}(r,y)-u(r,y))\rangle _{\mathcal{H}} \\
&&\times \varphi (x)\varphi (y)\langle \varphi _{\delta }(s-\cdot
)p_{\varepsilon }(x-\cdot ),\varphi _{\delta }(r-\cdot
)p_{\varepsilon }(y-\cdot )\rangle _{\mathcal{H}}dsdxdrdy\
\end{eqnarray*}%
converges to zero as $\varepsilon $ and $\delta $ tend to zero. \ Hence, by (%
\ref{e.4.2}) $B_{\varepsilon ,\delta }^{1}$ converges to \ zero in
$L^{2}$ as $\varepsilon $ and $\delta $ tend to zero.

\medskip \noindent \textbf{Step 5} \ The second summand in the right-hand
side of (\ref{BB}) \ can be written as%
\begin{eqnarray*}
B_{\varepsilon ,\delta }^{2}
&=&\int_{0}^{t}\int_{\mathbb{R}^d}\varphi (x)\langle D(u
^{\varepsilon ,\delta }(s,x)-u(s,x)),\varphi _{\delta
}(s-\cdot )p_{\varepsilon }(x-\cdot )\rangle _{\mathcal{H}}dsdx \\
&=&\int_{0}^{t}\int_{\mathbb{R}^d}\varphi (x)E^{X}\left(
f(X_{s}^{x})\exp \left( V_{s,x}^{\varepsilon ,\delta }\right)
\langle A_{s,x}^{\varepsilon
,\delta },\varphi _{\delta }(s-\cdot )p_{\epsilon }(x-\cdot )\rangle _{%
\mathcal{H}}\right) dsdx \\
&&-\int_{0}^{t}\int_{\mathbb{R}}\varphi (x)E^{X}\left(
f(X_{s}^{x})\exp \left( V_{s,x}\right) \langle \delta (X_{s-\cdot
}^{x}-\cdot ),\varphi _{\delta }(s-\cdot )p_{\varepsilon }(x-\cdot
)\rangle _{\mathcal{H}}\right)
dsdx \\
&=&B_{\varepsilon ,\delta }^{3}-B_{\varepsilon ,\delta }^{4}
\end{eqnarray*}
where%
\begin{eqnarray*}
\langle A_{s,x}^{\varepsilon ,\delta },\varphi _{\delta }(s-\cdot
)p_{\epsilon }(x-\cdot )\rangle _{\mathcal{H}} &=&\ \alpha
_H\int_{[0,s]^{3}}\int_{\mathbb{R}^{2d}}\ |r -v|^{2H_{0}-2}
\prod_{i=1}^d
|y_i-z_i|^{2H_{i}-2} \\
&&\times \varphi _{\delta }(s-r )p_{\varepsilon }(X_{r}^x-y ) \\
&&\times \varphi _{\delta }(s-v)p_{\varepsilon }(x-z)dy dyzdrdrdv,
\end{eqnarray*}%
and%
\begin{eqnarray*}
&&\langle \delta (X_{s-\cdot }^{x}-\cdot ),\varphi _{\delta
}(s-\cdot
)p_{\varepsilon }(x-\cdot )\rangle _{\mathcal{H}} \\
&=&\alpha _H \int_{[0,s]^{2}}\int_{\mathbb{R}^d}\ v^{2H_{0}-2}
\prod_{i=1}^d |X_{r} ^{x_i} -y_i|^{2H_{i}-2}\varphi _{\delta
}(r-v)p_{\varepsilon }(x-y)dydv dr.
\end{eqnarray*}%
Lemma \ref{lemma2} and Lemma \ref{lemma3} \ imply that%
\begin{equation}
\langle A_{s,x}^{\varepsilon ,\delta },\varphi _{\delta }(s-\cdot
)p_{\varepsilon }(x-\cdot )\rangle _{\mathcal{H}}\leq
C\int_{0}^{s}r^{2H_{0}-2} \prod_{i=1}^d |X^i_{r}|^{2H_{i}-2}dr,
\label{c1}
\end{equation}%
and%
\begin{equation}
\langle \delta (B_{s-\cdot }^{x}-\cdot ),\varphi _{\delta
}(s-\cdot )p_{\varepsilon }(x-\cdot )\rangle _{\mathcal{H}}\leq
C\int_{0}^{s}r^{2H_{0}-2} \prod_{i=1}^d |X^i_{r}|^{2H_{i}-2}dr,
\label{c2}
\end{equation}%
for some constant $C>0$. Then, from (\ref{c1}) and (\ref{c2}) and
from the fact that the random variable $\int_{0}^{s}r^{2H_{0}-2}
\prod_{i=1}^d |X^i_{r}|^{2H_{i}-2}dr$ is square integrable because
of Lemma \ref{lemma4}, we can apply the dominated convergence
theorem and get that $B_{\varepsilon
,\delta }^{3}$ and $B_{\varepsilon ,\delta }^{4}$ converge both in $L^{2}$ to%
\begin{equation*}
\alpha _H\int_{0}^{t}\int_{\mathbb{R}^d}\varphi (x)E^{B}\left(
f(X_{s}^{x})\exp \left( V_{s,x}\right) \int_{0}^{s}r^{2H_{0}-2}
\prod_{i=1}^d |X^i_{r}|^{2H_{i}-2}dr\right) dsdx,
\end{equation*}%
as $\varepsilon $ and $\delta $ tend to zero. Therefore \
$B_{\varepsilon ,\delta }^{2}$ converges in $L^{2}$ to zero as
$\varepsilon $ and $\delta $ tend to zero. This completes the
proof.
\end{proof}

\begin{remark}
 The uniqueness of the solution remains to be investigated in a
future work. The definition of the Stratonovich integral as a
limit in probability makes the uniqueness problem nontrivial, and
it is not clear how to proceed.
\end{remark}

As a corollary  of Theorem \ref{th1} we obtain the following
result.

\begin{corollary}
\label{t.4.4} Suppose $2H_{0}+\frac1\alpha\sum_{i=1}^{d}(2H_{i}-2)>1$. Then the solution $%
u(t,x)$ given by \hbox{(\ref{utx})} has finite moments of all
orders. Moreover, for any positive integer $p$, we have
\begin{eqnarray}
&&E\left( u(t,x)^{p}\right) =E\Bigg(\prod_{j=1}^{p}f(X_{t}^{j}+x)
\label{e.4.7} \\
&&\times \exp \left[ \frac{\alpha _{H}}{2}\sum_{j,k=1}^{p}\int_{0}^{t}%
\int_{0}^{t}|s-r|^{2H_{0}-2}%
\prod_{i=1}^{d}|X_{s}^{j,i}-X_{r}^{k,i}|^{2H_{i}-2}dsdr\right]
\Bigg)\,, \nonumber
\end{eqnarray}%
where $X_{1},\ldots ,X_{p}$ are independent $d$-dimensional
standard symmetric $\alpha$-stable L\'evy motions.
\end{corollary}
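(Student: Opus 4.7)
The plan is to pass to the limit $\varepsilon,\delta\downarrow 0$ in an explicit formula for the $p$-th moment of the approximant $u^{\varepsilon,\delta}(t,x)$, exploiting the $L^p(W)$ convergence $u^{\varepsilon,\delta}\to u$ established in Step 1 of the proof of Theorem \ref{th1}. The right-hand side will be computed by introducing $p$ independent copies of the stable process and using conditional Gaussianity (with respect to $W$) of the stochastic integral in the exponent.

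First, by Fubini and the independence of the copies,
\begin{equation*}
\bigl(u^{\varepsilon,\delta}(t,x)\bigr)^p = E^{X^1,\dots,X^p}\Bigl[\prod_{j=1}^p f(X_t^j+x)\exp\Bigl(\sum_{j=1}^p V_{t,x}^{\varepsilon,\delta}(X^j)\Bigr)\Bigr],
\end{equation*}
where $X^1,\dots,X^p$ are independent copies of $X$, all independent of $W$. Conditional on $(X^1,\dots,X^p)$, the sum $\sum_j V_{t,x}^{\varepsilon,\delta}(X^j)$ equals the Wiener integral $W\bigl(\sum_j A_{t,x}^{\varepsilon,\delta}(X^j)\bigr)$ and is therefore centered Gaussian, so
\begin{equation*}
E^W\exp\Bigl(\sum_j V_{t,x}^{\varepsilon,\delta}(X^j)\Bigr) = \exp\Bigl(\tfrac{1}{2}\sum_{j,k=1}^p \langle A_{t,x}^{\varepsilon,\delta}(X^j),A_{t,x}^{\varepsilon,\delta}(X^k)\rangle_{\mathcal{H}}\Bigr).
\end{equation*}
The same computation used in the proof of Theorem \ref{teo1} to evaluate the limit of $\|A_{t,x}^{\varepsilon,\delta}\|_{\mathcal{H}}^2$ (cf.\ the bound (\ref{e5})), applied now to two possibly distinct paths $X^j,X^k$, identifies this exponent in the limit with $\frac{\alpha_H}{2}\sum_{j,k}\int_0^t\int_0^t |s-r|^{2H_0-2}\prod_{i=1}^d |X_s^{j,i}-X_r^{k,i}|^{2H_i-2}\,ds\,dr$.

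To justify dominated convergence on the right-hand side I would use the $\ell^p$ inequality
\begin{equation*}
\Bigl\|\sum_j A_{t,x}^{\varepsilon,\delta}(X^j)\Bigr\|_{\mathcal{H}}^2 \le p\sum_{j=1}^p \|A_{t,x}^{\varepsilon,\delta}(X^j)\|_{\mathcal{H}}^2,
\end{equation*}
which yields the uniform majorant $\prod_{j=1}^p \exp\bigl(\tfrac{p}{2}\|A_{t,x}^{\varepsilon,\delta}(X^j)\|_{\mathcal{H}}^2\bigr)$. Combined with the bound (\ref{e5}) on each diagonal norm and the independence of the copies, Theorem \ref{teo2} applied to each $X^j$ separately produces an $\varepsilon,\delta$-uniform $L^1$ dominator and, after passing to the limit, the finiteness of $E(u(t,x)^p)$ for bounded $f$.

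The main obstacle is precisely this uniform integrability step: a direct attack would require a new exponential integrability theorem for the mixed $(j,k)$ double integrals involving two independent stable processes, which is not immediately covered by Theorem \ref{teo2}. The $\ell^p$ trick above sidesteps this by reducing everything to the diagonal terms, where Theorem \ref{teo2} applies directly through the independence of the $X^j$.
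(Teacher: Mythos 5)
Your argument is correct and is essentially the paper's intended (unwritten) proof of this corollary: introduce $p$ independent copies of $X$, use the conditional Gaussianity of the exponent to produce the double sum of double integrals as in (\ref{e.2.14}) and (\ref{b1}), and control the off-diagonal terms by Cauchy--Schwarz so that independence of the copies together with Theorem \ref{exp} gives integrability. The detour through $u^{\varepsilon,\delta}$ and dominated convergence is sound but not really needed, since the same conditional computation can be applied directly to $V_{t,x}(X^j)$, whose conditional covariances are given by polarizing (\ref{e.2.14}).
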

\begin{proof}
Using the Feynman-Kac formula (\ref{utx}), we have
\begin{align*}
&E(u(t,x)^p)\\
=&E^W\left[\prod_{j=1}^p E^{X_j} \left(f(X_t^j+x)\exp\left(\int_0^t\int_{\R^d}\delta(X_{t-r}^j+x-y)W(dr,dy)\right)\right)\right]\\
=&E^{X_1,\cdots,X_p}\left[\prod_{j=1}^p f(X_t^j+x) E^W \exp\left(\sum_{j=1}^p\int_0^t\int_{\R^d}\delta(X_{t-r}^j+x-y)W(dr,dy)\right)\right] \\
=&E^{X_1,\cdots,X_p}\left[\prod_{j=1}^p f(X_t^j+x) \exp\left(\frac12E^W\left(\int_0^t\int_{\R^d}\sum_{j=1}^p\delta(X_{t-r}^j+x-y)W(dr,dy)\right)^2\right)\right] \\
=&E\left[\prod_{j=1}^p f(X_t^j+x) \exp\left(\frac{\alpha_H}2\sum_{j,k=1}^p\int_0^t\int_0^t|s-r|^{2H_0-2}\prod_{i=1}^d
|X_s^{j,i}-X_r^{k,i}|^{2H_i-2}dsdr\right)\right],
\end{align*}
where the last equation follows form \eref{e.2.14}. 
\end{proof}

\setcounter{equation}{0}
\section {H\"{o}lder continuity of the solution}

In this section, we study the H\"older continuity of the
solution to Equation (\ref{e.1.1}). The main result of this
section is the following theorem.

\begin{theorem}
\label{t.6.1} Suppose that $2H_0+\frac{1}{\alpha}\sum_{i=1}^d
(2H_{i}-2)>1$ and let $u(t,x)$ be a weak solution of Equation
(\ref{e.1.1}) given by (\ref{utx}). Then $u(t,x)$ has a continuous modification such
that for any $\rho \in \left(0, \frac \kappa 2 \right)$ (where
$\kappa$ has been defined in (\ref{kappa})), and any compact
rectangle $I \subset \mathbb{R}_+\times\mathbb{R}^d $ there exists
a
positive random variable $K_I $ such that almost surely, for any $%
(s,x),(t,y)\in I$ we have
\begin{equation*}
| u(t,y )- u(s,x )|\le K_I (|t-s|^\rho +|y-x|^{\alpha\rho }).
\end{equation*}
\end{theorem}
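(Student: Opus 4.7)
The plan is to invoke a multi-parameter Kolmogorov continuity criterion, reducing the claim to a moment estimate of the form
$$
E\bigl|u(t,y)-u(s,x)\bigr|^{p}\le C_I\bigl(|t-s|^{p\rho'}+|y-x|^{p\alpha\rho'}\bigr),\qquad (s,x),(t,y)\in I,
$$
for every $\rho'\in(\rho,\kappa/2)$, every compact rectangle $I$, and every sufficiently large even integer $p$. Once $p\rho'>1$ and $p\alpha\rho'>d$, Kolmogorov yields a modification with H\"older exponents arbitrarily close to $\rho'$ in time and $\alpha\rho'$ in space; letting $\rho'\uparrow\kappa/2$ then covers the full range. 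I would split $u(t,y)-u(s,x)=(u(t,y)-u(t,x))+(u(t,x)-u(s,x))$ and treat the spatial and temporal increments separately.

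For the moment of either increment I would use the natural generalization of Corollary~\ref{t.4.4} to distinct space-time points,
$$
E\Bigl[\prod_{j=1}^{p}u(t_j,y_j)\Bigr]
=E\Bigl[\prod_{j=1}^{p}f(X_{t_j}^{j}+y_j)\,\exp\Bigl(\tfrac{\alpha_H}{2}\sum_{j,k=1}^{p}\Lambda_{jk}\Bigr)\Bigr],
$$
where $X^1,\dots,X^p$ are independent copies of the stable motion and
$$
\Lambda_{jk}=\int_0^{t_j}\!\!\int_0^{t_k}|r-s|^{2H_0-2}\prod_{i=1}^d|X_s^{j,i}-X_r^{k,i}+(y_j-y_k)_i|^{2H_i-2}\,drds.
$$
Expanding $(u(t,y)-u(t,x))^{p}$ binomially and inserting this formula produces an alternating sum over $2^{p}$ quadratic forms in which the diagonal (self-covariance) pieces $\Lambda_{jj}$ are common to every summand, while the off-diagonal $\Lambda_{jk}$, $j\ne k$, depend on whether the spatial labels are $y$ or $x$. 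Cauchy--Schwarz, which is responsible for the factor $\tfrac12$ in $\kappa/2$, then peels off the exponential weight -- controlled by Theorem~\ref{exp} -- and leaves a polynomial factor measuring an $L^2$ modulus of continuity in the spatial label.

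To extract the quantitative $|y-x|^{p\alpha\rho'}$ decay from that polynomial factor, I would use the convolution factorization
$$
|z|^{2H_i-2}=C_i\int_{\mathbb{R}}|z-\xi|^{(2H_i-3)/2}|\xi|^{(2H_i-3)/2}\,d\xi
$$
already employed in Step~2 of the proof of Theorem~\ref{exp}. This rewrites each cross-covariance $\Lambda_{jk}$ as an $L^2$ inner product of ``stable snake'' functions whose dependence on the spatial argument is pure translation; the alternating sum is then controlled by a $p$-fold $L^2$ increment of translated snakes, estimated by the same sub-additive/scaling machinery (with critical exponent $\kappa$) used in the proof of Theorem~\ref{exp}. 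For the temporal increment I would use the $\alpha$-stable scaling $X_{cr}\stackrel{d}{=}c^{1/\alpha}X_r$ together with the Markov property of $X$ at time $s$: this converts a time shift $|t-s|$ into an effective spatial shift of order $|t-s|^{1/\alpha}$, which is exactly what produces the ratio $1:\alpha$ between the temporal and spatial H\"older exponents.

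The main technical obstacle is this quantitative $L^2$-modulus estimate. Because $z\mapsto|z|^{2H_i-2}$ is singular at the origin, a direct Taylor expansion in $y-x$ is unavailable, and the convolution factorization is used precisely to trade that singularity for tractable $L^2$ structure. Making Cauchy--Schwarz do its work exactly once, and pairing the resulting exponential factor with the critical integrability from Theorem~\ref{exp}, is what allows the H\"older exponent to reach all the way up to $\kappa/2$.
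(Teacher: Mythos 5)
Your overall skeleton (reduce to a $p$-th moment bound for spatial and temporal increments separately, then apply a Kolmogorov--Kunita continuity criterion with $p$ large) coincides with the paper's, but the core step --- obtaining $E|u(t,y)-u(s,x)|^{p}\le C(|t-s|^{\kappa p/2}+|y-x|^{\alpha\kappa p/2})$ with the exponent \emph{proportional to} $p$ --- is where your route has a genuine gap. You propose to expand $(u(t,y)-u(t,x))^{p}$ binomially and insert a multi-point version of the moment formula of Corollary \ref{t.4.4}, so that the increment becomes an alternating sum of $2^{p}$ expectations of exponentials of coupled quadratic forms $\Lambda_{jk}$. Everything then hinges on a cancellation estimate showing that this $p$-fold mixed difference of exponentials of \emph{singular, non-differentiable} kernels is of size $|y-x|^{p\alpha\rho'}$; you assert that ``Cauchy--Schwarz peels off the exponential weight and leaves a polynomial factor measuring an $L^{2}$ modulus,'' but a single application of Cauchy--Schwarz produces one factor of that $L^{2}$ modulus, i.e.\ an exponent of order $\alpha\kappa$, not $p\alpha\kappa/2$; nothing in your outline multiplies the gain by $p$. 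The convolution factorization and sub-additivity machinery from Theorem \ref{exp} control the size of each $\Lambda_{jk}$ and its exponential moments, but they do not by themselves organize the cancellations among the $2^{p}$ terms, and you give no mechanism that does.

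The paper avoids this entirely by exploiting the conditional Gaussianity of $V_{t,x}$ given $X$: writing $u=E^{X}[f\,e^{V}]$, bounding $|e^{V}-e^{\tilde V}|\le|\tilde V-V|e^{\max(V,\tilde V)}$, applying Cauchy--Schwarz under $E^{X}$, using the exponential integrability of Theorem \ref{exp} to absorb the weight, and then --- this is the missing ingredient in your argument --- using Minkowski together with the equivalence of $L^{2}$ and $L^{p}$ norms for the (conditionally) Gaussian variable $\tilde V-V$ to reduce everything to $\bigl[E^{X}E^{W}|\tilde V-V|^{2}\bigr]^{p/2}$ as in (\ref{j1}). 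The factor $p/2$ in the exponent comes precisely from this conditional hypercontractivity step, after which only the \emph{variance} increment $C(s,t,x,y)$ of (\ref{j2}) needs to be estimated: the spatial part via Lemma \ref{Lem7.6} (bounding $E(|\xi|^{2H_j-2}-|z+\xi|^{2H_j-2})$ by a constant on $\{|r-v|\le|x_j-y_j|^{\alpha}\}$ and by $C|x_j-y_j|^{2}|r-v|^{-2/\alpha}$ elsewhere), and the temporal part by a direct integral computation rather than the Markov/scaling heuristic you sketch. Unless you supply a substitute for this Gaussian $L^{2}$--$L^{p}$ reduction (or prove the $p$-fold cancellation estimate your expansion requires), your argument does not reach the exponent $\kappa/2$, and indeed does not yield any moment bound with exponent growing linearly in $p$, which is what the Kolmogorov criterion needs.
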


\begin{proof}
The proof will be done in several steps.

\medskip \noindent \textbf{Step 1} \quad Recall that $V_{t,x}=\int_0^t\int_{%
\mathbb{R}^d} \delta(X_{t-r}^x-y)W(dr,dy)$ denotes the random
variable introduced in (\ref{e.3.4}) and
\begin{equation*}
u(t,x)= E^X \left(f(X^x_t) \exp \left(V(_{t,x} \right) \right).
\end{equation*}
Set $V =V_{s,x}$ and $\tilde V=V _{t,y }$. Then we can write
\begin{eqnarray*}
&& E^W|u(s,x )-u(t,y)|^p = E^W|E^X (e^{V }-e^{\tilde V})|^p \\
&& \qquad \le E^W \big(E^X [|\tilde V -V |e^{ \max(V , \tilde V )}]\big)^p \\
&& \qquad\le E^W \big[ \big(E^Xe^{2 \max(V , \tilde V ) }\big)^{p/2}\big(%
E^X(\tilde V -V )^2\big)^{p/2}\big] \\
&& \qquad \le [E^W E^X e^{2p \max(V , \tilde V ) }\big]^\frac{1}{2} \big[E^W%
\big(E^X (\tilde V -V )^2\big)^p\big]^\frac{1}{2}.
\end{eqnarray*}
Applying Minkowski's inequality, the equivalence between the $L^2$
norm and the $L^p$ norm for a Gaussian random variable, and using
the exponential integrability property (\ref{e21}) we obtain
\begin{eqnarray}
&& E^W|u(s,x)-u(t,y)|^p \le C \big[E^W\big(E^X (\tilde V -V )^2\big)^p\big]^%
\frac{1}{2}  \notag \\
&& \quad\le C_p \big[E^X E^W|\tilde V -V |^2\big]^{p/2}.
\label{j1}
\end{eqnarray}
In a similar way to (\ref{e.2.14}) we can deduce the following
formula for the conditional variance of $\tilde V -V $
\begin{eqnarray}
&&E^W|\tilde V -V |^2= \alpha_H E^X\Bigg( \int_0^{s}\int_0^{s} |r
-v|^{2H_0-2} \prod_{i=1}^d |X^{i}_{s-r }-X^{i}_{s-v}|^{2H_i-2}drdv  \notag \\
&& \quad + \int_0^{t }\int_0^{t } |r-v|^{2H_0-2}
\prod_{i=1}^d|X^{i}_{t -r
}-X^{i}_{t -v}|^{2H_i-2}drdv  \notag \\
&&\quad -2\int_0^{s}\int_0^{t } |r-v|^{2H_0-2}
\prod_{i=1}^d|X_{s-r
}^{i}-X_{t-v}^{i} +x_i -y_i|^{2H_i-2}drdv\Bigg)  \notag \\
&&\quad := \alpha_H C(s, t , x , y).  \label{j2}
\end{eqnarray}

\medskip \noindent \textbf{Step 2} $\quad $ Fix $1\leq j\leq d$. Let us
estimate $C(s,t,x,y)$ when $s=t,$ and $x_{i}=y_{i}$ for all
$i\not=j$. We can write
\begin{equation}
C(t,t,x,y)=2\int_{0}^{t}\int_{0}^{t}|r-v|^{\kappa -1}\prod_{i\not=j}^{d}E%
\big(|\xi |^{2H_{i}-2}\big)E\big(|\xi |^{2H_{j}-2}-|z+\xi |^{2H_{j}-2}\big)%
drdv,  \label{eqa1}
\end{equation}%
where $z=\dfrac{x_{j}-y_{j}}{|r-v|^\frac 1\alpha}$ and $\xi $ is a
standard $\alpha$-stable
variable.  By Lemma \ref{Lem7.6} the factor $E%
\big(|\xi |^{2H_{j}-2}-|z+\xi |^{2H_{j}-2}\big)$ can be bounded by
a constant if $|r-v|\leq (x_{j}-y_{j})^{\alpha}$, and it can be bounded by $%
C|x_{j}-y_{j}|^2|r-v|^{-\frac 2\alpha}$ if $%
|r-s|>(x_{j}-y_{j})^{\alpha} $. In this way we obtain
\begin{eqnarray*}
&&C(t,t,x,y)\leq C\int_{\{0<r,v<t,|r-v|\leq
(x_{j}-y_{j})^{\alpha}\}}|r-v|^{\kappa -1}drdv \\
&&\qquad
+C|x_{j}-y_{j}|^{2}\int_{\{0<r,v<t,|r-v|>(x_{j}-y_{j})^{\alpha}\}}|r-v|^{\kappa
-1-\frac 2\alpha}drdv \\
&&\qquad \qquad \qquad \leq C|x_{j}-y_{j}|^{\alpha\kappa }.
\end{eqnarray*}%
So, from (\ref{j1}) we have
\begin{equation}
E^{W}|u(t,x)-u(t,y)|^{p}\leq C|x_{j}-y_{j}|^{\frac\alpha 2 \kappa p}.
\label{e.6.1}
\end{equation}

\medskip \noindent \textbf{Step 3} \quad Suppose now that $s<t ,$ and $x =y$%
. Set $\delta=\frac{1}{\alpha}\sum_{i=1}^d (2H_i-2)$. We have
\begin{align*}
&C(s,t,x,x) \\
=&C\Bigg[ \int_{s} ^{t } \int_{s} ^{t } |r-v |^{\kappa-1 }drdv \\
&+\int_0^{s}\int_0^{t }|r-v|^{2H_0-2}\big(|r -v|^{\delta}-|r -v+
t-s|^{\delta }\big)drdv\Bigg] \,.
\end{align*}
The first integral is $O((t-s)^{\kappa +1})$, when $t-s$ is small.
For the second integral we use the change of variable $\sigma =v-r, v=\tau $, and we have
\begin{align*}
&\int_0^{s}\int_0^{t }|r-v|^{2H_0-2}\big(|r -v|^{\delta}-|r -v+
t-s|^{\delta
}\big)drdv \\
\le & \int_0^{s } d\tau \int_{-t }^{s} |\sigma|^{2H_0-2}\big|
|\sigma
|^{\delta }-|\sigma + t-s |^{\delta}\big|d\sigma \\
=& t \bigg[ \int_0^{s} \sigma ^{2H_0-2}\big(\sigma
^{\delta}-(\sigma +
t-s)^{\delta}\big)d\sigma \\
&+\int_{-t}^{s-t} (-\sigma)^{2H_0-2}\big((-\sigma-
t+s)^{\delta}-(-\sigma)^{\delta}\big)d\sigma \\
&+\int_{s-t}^0 (-\sigma)^{2H_0-2}\big|(-\sigma
)^{\delta}-(\sigma+t-s)^{\delta }\big|d\sigma \bigg] \\
=& t [A^{\prime}+B^{\prime}+C^{\prime}].
\end{align*}
For the first term in the above decomposition we can write
\begin{align*}
A^{\prime}&=(t-s)^{\kappa } \int_0^{\frac{s}{t-s}} \sigma ^{2H_0-2}\big(%
\sigma ^{\delta}-(\sigma+1)^{\delta}\big)d\sigma \\
& \le (t-s)^{\kappa } \int_0^\infty \sigma^{2H_0-2}\big(%
\sigma^{\delta}-(\sigma +1)^{\delta}\big)d\sigma \\
& \le C(t-s)^{\kappa },
\end{align*}
because $2H_0-2 + \delta-1<-1.$ Similarly we can get that
\begin{equation*}
B^{\prime}\le (t-s)^{\kappa } \int_1^\infty \sigma
^{2H_0-2}\big(\sigma ^{\delta }-(\sigma +1)^{\delta}\big)d\sigma.
\end{equation*}
At last,
\begin{equation*}
C^{\prime}\le \int_0^{t-s} \sigma ^{2H_0-2}\big(\sigma^{\delta}+(
t-s-\sigma)^{\delta }\big)d\sigma =C (t-s)^{\kappa }.
\end{equation*}
So we have
\begin{equation}  \label{e.6.2}
E^W|u(s,x)-u(t,y)|^p\le C (t-s)^{\frac \kappa 2 p}.
\end{equation}

\medskip \noindent \textbf{Step 4} \quad Combining Equation \ref{e.6.1} and
Equation \ref{e.6.2} with the estimates (\ref{j1}) and (\ref{j2}),
the
result of this theorem now can be concluded from Kolmogorov continuity criterion.
\end{proof}

\setcounter{equation}{0} 
\section{Skorohod type equations and chaos expansion}

In this section we consider the following heat equation on
$\mathbb{R}^d$
\begin{equation}
\begin{cases}
\dfrac{\partial u}{\partial t}= -(-\Delta)^\frac{\alpha}{2} u+ u \diamond
\frac{\partial
^{d+1}}{\partial t\partial x_1 \cdots \partial x_d}W \\
u(0,x)=f(x)\,.%
\end{cases}
\label{e.9.1}
\end{equation}%
The difference between the above equation and Equation
\hbox{(\ref{e.1.1})} is that here we use the Wick product
$\diamond $ (see \cite{hy}, for
example). When $\al=2$ this equation is studied in \cite{hunualart} in the case $d=1$,  $ 
H_1=1/2$,  and $H_0<1/2$ and in \cite{hns}  when $H_0, H_1, \cdots, H_d>1/2$. 
 As in these two papers, we can define the following
notion of mild solution.

\begin{definition}
\label{def1} An adapted random field $u=\{u(t,x),t\geq 0,x\in \mathbb{R}%
^{d}\}\,\ $such that $E(u ^{2}(t,x))<\infty \,$ for all $(t,x)$\
is a mild solution to Equation (\ref{e.9.1}) if \ for any
$(t,x)\in \lbrack 0,\infty
)\times \mathbb{R}^{d}$, the process $\{q_{t-s}(x-y)u(s,y)\mathbf{1}%
_{[0,t]}(s),s\geq 0,y\in \mathbb{R}^{d}\mathbb{\}}$ is Skorohod
integrable, and the following equation holds
\begin{equation}
u(t,x)=q_{t}f(x)+\int_{0}^{t}\int_{\mathbb{R}^{d}}q_{t-s}(x-y)u(s,y)\delta
W_{s,y},  \label{e.9.2}
\end{equation}
where $q_t(x)$ denotes the density function of $X_t$ and
$q_tf(x)=\int_{\mathbb{R}^d} q_t(x-y) f(y)dy$.
\end{definition}

The fact that the noise in the equation (\ref{e.9.1}) is of  multiplicative form  allows
us to find recursively an explicit expression for the Wiener chaos expansion of
the solution, which is also unique in the $L^2$ space. This approach has been extensively used in the literature. For instance, we refer to the papers \cite{hu}, \cite{hunualart},\cite{tudor} among others.
As in the paper \cite{hunualart} the mild solution $u(t,x)$ to %
\hbox{(\ref{e.9.1})} admits the following Wiener chaos expansion
\begin{equation}
u(t,x)=\sum_{n=0}^{\infty }I_{n}(f_{n}(\cdot ,t,x)),
\label{e.9.3}
\end{equation}%
where $I_n$ denotes the multiple stochastic integral with respect
to $W$ and $f_{n}(\cdot ,t,x)$ is a symmetric element in
$\mathcal{H} ^{\otimes n}$, defined explicitly as
\begin{eqnarray}
&&f_{n}(s_{1},y_{1},\ldots ,s_{n},y_{n},t,x)=\frac{1}{n!}  \label{e.9.4} \\
&&\times q_{t-s_{\sigma (n)}}(x-y_{\sigma (n)})\cdots q_{s_{\sigma
(2)}-s_{\sigma (1)}}(y_{\sigma (2)}-y_{\sigma (1)})q_{s_{\sigma
(1)}}f(y_{\sigma (1)}).  \notag
\end{eqnarray}%
In the above equation $\sigma $ denotes a permutation of
$\{1,2,\ldots ,n\}$ such that $0<s_{\sigma (1)}<\cdots <s_{\sigma
(n)}<t$. Moreover,  the solution if   exists ,   will be unique
because the kernels in the Wiener chaos expansion are uniquely
determined.

The following is the main result of this section.

\begin{theorem}\label{theorem-6.2}
Suppose that $2H_0+\frac{1}{\alpha}\sum_{i=1}^d (2H_{i}-2)>1$ and that $f$ is a
bounded
measurable function. Then the process
\begin{eqnarray}
u(t,x) &=&E^{X}\Bigg[ f(X_{t}^{x})\exp \Bigg( \int_{0}^{t}\int_{\mathbb{R}%
^d}\delta (X_{t-r}^{x}-y)W(dr,dy)  \notag \\
&&\quad -\frac12 \alpha _H \int_{0}^{t}\int_{0}^{t}|r
-s|^{2H_{0}-2} \prod_{i=1}^d \left| X^i_{r }-X^i_{s}\right|
^{2H_{i}-2}dr ds\Bigg)\Bigg] \label{e.9.5}
\end{eqnarray}
is  the unique mild solution to Equation (\ref{e.1.1}).
\end{theorem}

\begin{proof}
From Theorem \ref{exp}, we obtain that the expectation $E^X$ in Equation (\ref%
{e.9.5}) is well defined. Then, it suffices to show that the
random variable $u(t,x)$ has the Wiener chaos expansion
(\ref{e.9.3}). This can be easily proved by expanding the
exponential and then taken the expectation with respect to $X$.

Theorem \ref{teo1} implies that almost surely $\delta
(X_{t-\cdot}^{x}-\cdot)$ is and element of $\mathcal{H}$ with a
norm given by \hbox{(\ref{e.3.4})}. As a consequence, almost
surely with respect to the stable L\'evy motion $X$, we
have the following chaos expansion for the exponential factor in Equation (%
\ref{e.9.5})
\begin{eqnarray}
&& \exp \Bigg( \int_{0}^{t}\int_{\mathbb{R}^d}\delta
(X_{t-r}^{x}-y)W(dr,dy)
\notag \\
&&-\frac12 \alpha _H \int_{0}^{t}\int_{0}^{t}|r -s|^{2H_{0}-2}
\prod_{i=1}^d \left| X^i_{r }-X^i_{s}\right| ^{2H_{i}-2}dr
ds\Bigg)=\sum_{n=0}^\infty I_n(g_n)\,,  \notag
\end{eqnarray}
where $g_n$ is the symmetric element in $\mathcal{H}^{\otimes n}$
given by
\begin{eqnarray}
g_n (s_{1},y_{1},\ldots ,s_{n},y_{n},t,x)=\frac{1}{n!}\ {\delta}%
(X_{t-s_1}^x-y_1) \cdots {\delta}(X_{t-s_n}^x-y_n)\,.
\end{eqnarray}
Thus the right hand side of \hbox{(\ref{e.9.5})} admits the
following chaos expansion
\begin{eqnarray}
u(t,x)=\sum_{n=0}^\infty \frac1{n!} I_n(h_n(\cdot,t,x))\,,
\end{eqnarray}
with
\begin{eqnarray}
h_n(t,x)=E^X \left[f(X_t^x) {\delta}(X_{t-s_1}^x-y_1) \cdots {\delta}%
(X_{t-s_n}^x-y_n)\right]\,.
\end{eqnarray}
This can be regarded as a Feynman-Kac formula for the coefficients
of chaos expansion of the solution of \hbox{(\ref{e.9.1})}. To
compute the above expectation we shall use the following
\begin{eqnarray}
E^X \left[f(X_t^x) {\delta}(X_t^x-y) \big| \mathcal{F}_s\right] &=& \int_{%
\mathbb{R}^d} q_{t-s}(X_s^x -z) f(z) {\delta}(z-y) dz  \notag \\
&=& q_{t-s}(X_s^x -y)f(y)\,.  \label{e.9.7}
\end{eqnarray}
Assume that $0< s_{{\sigma}(1)}<\cdots<s_{{\sigma}(n)}<t$ for some
permutation ${\sigma}$ of $\{1, 2, \cdots, n\}$. Then conditioning
with respect to $\mathcal{F}_{t-s_{{\sigma}(1)}}$ and using the
Markov property of the L\'evy motion we have
\begin{eqnarray*}
h_n(t,x) & =& E^X\big\{ E^X \big[ {\delta}(X_{t-s_{{\sigma}(n)}}^x-y_{{\sigma%
}(n)}) \\
&&\quad \times \cdots
{\delta}(X_{t-s_{{\sigma}(1)}}^x-y_{{\sigma}(1)})
f(X_t^x) \big|\mathcal{F}_{t-s_{{\sigma}(1)}} \big]\big\} \\
& =& E^X\left[ {\delta}(X^x_{t-s_{{\sigma}(n)}} -y_{{\sigma}(n)} ) \cdots {%
\delta}(X_{t-s_{{\sigma}(1)} }^x-y_{{\sigma}(1)} )
q_{s_{{\sigma}(1)}} f(X_{t-s_{{\sigma}(1)}}^x )\right]\,.
\end{eqnarray*}
Conditioning with respect to $\mathcal{F}_{t-s_{{\sigma}(2)}}$ and using %
\hbox{(\ref{e.9.7})}, we have
\begin{eqnarray*}
h_n(t,x) &=&E^B\big\{ E^X\big[ {\delta}(X_{t-s_{{\sigma}(n)}^x}-y_{{\sigma}%
(n)} ) \\
&&\times {\delta}(X_{t-s_{{\sigma}(1)} }^x-y_{{\sigma}(1)} ) q_{s_{{\sigma}%
(1)}} f(X_{t-s_{{\sigma}(1)}}^x )\big]\big|\mathcal{F}_{t-s_{{\sigma}(2)}} %
\big\} \\
&=&E^X\Bigg\{ {\delta}(X_{t-s_{{\sigma}(n)}^x}-y_{{\sigma}(n)} ) \cdots {%
\delta}(X_{t-s_{{\sigma}(2)} }^x -y_{{\sigma}(2)} ) \\
&&\quad \times E^X\left[ {\delta}(X_{t-s_{{\sigma}(1)}
}^x-y_{{\sigma}(1)} )
q_{s_{{\sigma}(1)}} f(X_{t-s_{{\sigma}(1)}}^x )\big|\mathcal{F}_{t-s_{{\sigma%
}(2)}} \right] \Bigg\} \\
&=&E^X\Bigg[ {\delta}(X_{t-s_{{\sigma}(n)}^x}-y_{{\sigma}(n)} ) \cdots {%
\delta}(X_{t-s_{{\sigma}(2)} }^x-y_{{\sigma}(2)} ) \\
&&\quad \times p_{s_{{\sigma}(2)}-
s_{{\sigma}(1)}}(X_{t-s_{{\sigma}(2)} }^x-y_{{\sigma}(1)} ) q_{
s_{{\sigma}(1)}} f(y_{{\sigma}(1)} ) \Bigg]\,.
\end{eqnarray*}
Continuing this way we shall find out that
\begin{eqnarray}
h_n(t,x) =q_{t-s_{\sigma (n)}}(x-y_{\sigma (n)})\cdots
q_{s_{\sigma (2)}-s_{\sigma (1)}}(y_{\sigma (2)}-y_{\sigma
(1)})q_{s_{\sigma (1)}}f(y_{\sigma (1)})  \notag
\end{eqnarray}
which is the same as \hbox{(\ref{e.9.4})}.
\end{proof}

\begin{remark}
Theorem \ref{theorem-6.2} does not include the one-dimensional space-time white noise case, i.e. $d=1, H_0=H_1=\frac12$, for which equation (\ref{e.9.1}) has a unique mild solution while Feyman-Kac formula is not valid. Generally speaking, we need a stronger condition if we want to get a Feynman-Kac formula for the solution. In \cite{hunualart} and \cite{bt}, the authors studied the existence of mild solution for the stochastic heat equation with multiplicative noise.
\end{remark}

\begin{remark}
The method of this section can be applied to obtain a Feynman-Kac
formula
for the coefficients of the chaos expansion of the solution to Equation (\ref%
{e.1.1}):
\begin{equation*}
u(t,x)=\sum_{n=0}^{\infty }\frac{1}{n!}I_{n}(h_{n}(\cdot ,t,x))\,,
\end{equation*}%
with
\begin{eqnarray}
h_{n}(t,x) &=&E^{X}\Bigg[f(X_{t}^{x}){\delta }(X_{t-s_{1}}^{x}-y_{1})\cdots {%
\delta }(X_{t-s_{n}}^{x}-y_{n})  \notag \\
&&\quad \times \exp \left( \frac{1}{2}\alpha
_{H}\int_{0}^{t}\int_{0}^{t}|r-s|^{2H_{0}-2}\prod_{i=1}^{d}\left|
X_{r}^{i}-X_{s}^{i}\right| ^{2H_{i}-2}drds\right) \Bigg]\,.  \notag \\
&&
\end{eqnarray}
\end{remark}

\begin{remark}
 From the Feyman-Kac formula   we can derive the following formula for the moments of the solution analogous to (\ref{e.4.7}).
\begin{eqnarray*}
&&E\left( u(t,x)^{p}\right) =E\Bigg(\prod_{j=1}^{p}f(X_{t}^{j}+x)
  \\
&&\times \exp \left[  \alpha _{H} \sum_{j,k=1, j<k }^{p}\int_{0}^{t}%
\int_{0}^{t}|s-r|^{2H_{0}-2}%
\prod_{i=1}^{d}|X_{s}^{j,i}-X_{r}^{k,i}|^{2H_{i}-2}dsdr\right]
\Bigg),
\end{eqnarray*}%
where $p\ge 1$ is an integer, and $X^j, 1\le j\le d$, are
independent $d$-dimensional stable L\'evy motions.
\end{remark}

\section{Appendix}
%\begin{lemma}\label{lemma0}
%For any deterministic sub-additive function ${a(t)}, t \in \mathbb R^+$, the
%equality $$\lim_{t\to\infty}t^{-1}a(t) = \inf_{s>0}
%s^{-1}a(s)$$
%holds in the extended real line $[-\infty,\infty)$.
%\end{lemma}
The following lemma is from Lemma A.1 in \cite{hns}.
\begin{lemma}
\label{lemma1} Suppose $0<\beta<1, \epsilon >0, x>0, $ and that $X
$ is a standard normal random variable. Then there is a constant
$C$ independent of $x$ and $\epsilon $ (it may depend on
${\beta}$) such that
\begin{equation*}
E|x+\epsilon X|^{-\beta}\le C\min(\epsilon^{-\beta},x^{-\beta})\,.
\end{equation*}
\end{lemma}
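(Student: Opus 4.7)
The plan is to reduce the inequality to a one-parameter estimate by rescaling, and then verify two regimes by splitting the Gaussian integral. First, I set $m := x/\epsilon$ and change variables $y = \epsilon z$ to write
$$
E|x+\epsilon X|^{-\beta} \;=\; \frac{1}{\sqrt{2\pi}}\int_{\mathbb R} |x+\epsilon z|^{-\beta} e^{-z^2/2}\,dz \;=\; \epsilon^{-\beta}\, g(m), \qquad g(m) := E|m+X|^{-\beta}.
$$
The target inequality $E|x+\epsilon X|^{-\beta} \le C\min(\epsilon^{-\beta}, x^{-\beta})$ then reduces to the two assertions $g(m) \le C$ and $g(m) \le C m^{-\beta}$ (the second only needed for $m\ge 1$, since the first already covers $0\le m\le 1$).

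For the uniform bound $g(m) \le C$, I split according to whether $|m+z| \le 1$ or $|m+z| > 1$. On $\{|m+z|>1\}$ the integrand is dominated by $\phi(z)$, contributing at most $1$. On $\{|m+z|\le 1\}$ I bound the Gaussian density by $(2\pi)^{-1/2}$ and substitute $u = m+z$, reducing the contribution to $(2\pi)^{-1/2}\int_{|u|\le 1}|u|^{-\beta}\,du = \frac{2}{(1-\beta)\sqrt{2\pi}}$, which is finite precisely because $\beta < 1$.

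For the decay bound, I assume $m \ge 1$ and split the $z$-integration at $z = -m/2$. On $\{z \ge -m/2\}$ we have $m+z \ge m/2$, giving $|m+z|^{-\beta} \le 2^{\beta} m^{-\beta}$ and hence contribution $O(m^{-\beta})$. On $\{z < -m/2\}$ the Gaussian weight is at most $(2\pi)^{-1/2} e^{-m^2/8}$ at the boundary and smaller beyond; factoring out $e^{-m^2/16}$ and using the elementary identity $z^2/2 \ge m^2/16 + (z+m)^2/4$ on this set, the remaining integral after the substitution $u = m+z$ is of the same type as treated in the uniform-bound step and is therefore bounded by a constant. The total contribution from this region is $O(e^{-m^2/16})$, negligible compared to $m^{-\beta}$. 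Combining everything and multiplying by $\epsilon^{-\beta}$ yields the claim.

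The delicate point is the region $\{z < -m/2\}$ in the decay bound: the singularity of $|m+z|^{-\beta}$ at $z = -m$ lies inside the Gaussian tail, so I must balance the exponential smallness of $\phi(z)$ against the integrable singularity at $z=-m$, with $\beta < 1$ being essential for local integrability there. Everything else reduces to standard Gaussian estimates.
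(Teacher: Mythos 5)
Your proof is correct and takes essentially the same route as the paper: rescale to reduce the $\epsilon^{-\beta}$ bound to the uniform finiteness of $E|m+X|^{-\beta}$, and obtain the $x^{-\beta}$ bound by splitting at distance $m/2$ (equivalently $x/2$) from the singularity, with the near-singularity piece killed by Gaussian smallness. The only differences are cosmetic: you verify the uniform bound explicitly (the paper simply asserts $\sup_{z\ge 0}E|z+X|^{-\beta}<\infty$), and in the tail region you retain part of the Gaussian via $z^2/2\ge m^2/16+(z+m)^2/4$ over an unbounded set, whereas the paper keeps a bounded bad set $\{|z|<x/2\}$ and bounds the Gaussian there by its boundary value.
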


%\begin{proof}
%It is straightforward to check that
%$K=\sup_{z\ge0}E|z+X|^{-\beta}<\infty.$ Thus
%\begin{equation}
%E|x+\epsilon X|^{-\beta}=\epsilon^{-\beta}E|\dfrac{x}{\epsilon}%
%+X|^{-\beta}\le K\epsilon^{-\beta}\,.  \label{e.2.1}
%\end{equation}
%On the other hand,
%\begin{eqnarray*}
%E|x+\epsilon X|^{-\beta} &=&\dfrac{1}{\sqrt{2\pi}} \int_\mathbb{R}%
%|x+\epsilon y|^{-\beta}e^{-\frac{y^2}{2}}dy \\
%&=&\dfrac{1}{\sqrt{2\pi}}\Bigg(\int_{\{|x+\epsilon y|>
%\frac{x}{2}\}}
%|x+\epsilon y|^{-\beta} e^{-\frac{y^2}{2}}dy \\
%&& +\int_{\{|x+\epsilon y|\le \frac{x}{2}\}} |x+\epsilon y|^{-\beta}e^{-%
%\frac{y^2}{2}}dy\Bigg)\,.
%\end{eqnarray*}
%It is easy to see that the first integral is bounded by
%$Cx^{-{\beta}}$ for some constant $C$. The second integral,
%denoted by $B$ is bounded as follows.
%\begin{eqnarray*}
%B&=&C\dfrac{1}{\epsilon}\int_{|z|<\frac{x}{2}}|z|^{-\beta}e^{-\frac{(z-x)^2%
%}{2\epsilon^2}}dz \le C\dfrac{1}{\epsilon}\int_{|z|<\frac{x}{2}%
%}|z|^{-\beta}e^{-\frac{x^2}{8\epsilon^2}}dz \\
%&=&C\dfrac{x}{\epsilon}e^{-\frac{x^2}{8\epsilon^2}}x^{-\beta} \le
%Cx^{-\beta}\,.
%\end{eqnarray*}
%Thus we have $E|x+\epsilon X|^{-\beta}\le C |x|^{-{\beta}}$.
%Combining this with \hbox{(\ref{e.2.1})}, we obtain the lemma.
%\end{proof}

\begin{lemma}
\label{lemma1'} Suppose $0<\beta<1, \epsilon >0, a>0, $ and that
$Y $ is a standard symmetric $\alpha$-stable distributed random variable.
Then there is a constant $C$ independent of $x$ and $\epsilon $
(it may depend on ${\beta}$) such that
\begin{equation*}
E|x+\epsilon Y|^{-\beta}\le C\epsilon^{-\beta}.
\end{equation*}
\end{lemma}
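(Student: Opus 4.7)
The plan is to reduce everything to a scaling argument and a uniform bound on the translated negative moment, which is much simpler than the Gaussian case (Lemma \ref{lemma1}) because we only need the $\epsilon^{-\beta}$ bound (no improvement for large $x$).

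First I would use the scaling identity
\[
E|x+\epsilon Y|^{-\beta}=\epsilon^{-\beta}\,E\bigl|\tfrac{x}{\epsilon}+Y\bigr|^{-\beta},
\]
so the lemma reduces to showing that
\[
K:=\sup_{z\in\mathbb{R}}E|z+Y|^{-\beta}<\infty.
\]
By symmetry of $Y$ it suffices to take $z\ge 0$.

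To establish the uniform bound on $K$, I would use that the standard symmetric $\alpha$-stable density $f_Y$ is continuous, bounded, and integrable on $\mathbb{R}$ (these are classical properties from, e.g., \cite{ST}). After the substitution $u=z+y$,
\[
E|z+Y|^{-\beta}=\int_{\mathbb{R}}|u|^{-\beta}f_Y(u-z)\,du,
\]
and I would split the integral at $|u|=1$:
\[
\int_{|u|\le 1}|u|^{-\beta}f_Y(u-z)\,du\;\le\;\|f_Y\|_\infty\int_{|u|\le 1}|u|^{-\beta}\,du,
\]
which is finite because $\beta<1$, and
\[
\int_{|u|>1}|u|^{-\beta}f_Y(u-z)\,du\;\le\;\int_{\mathbb{R}}f_Y(u-z)\,du=1.
\]
Both bounds are independent of $z$, giving $K<\infty$, and combined with the scaling identity this yields $E|x+\epsilon Y|^{-\beta}\le K\epsilon^{-\beta}$, as required.

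I do not expect any real obstacle: the only input beyond elementary calculus is the boundedness of $f_Y$, which holds uniformly for every $\alpha\in(0,2]$. (The $\alpha=2$ case already follows from Lemma \ref{lemma1}.) Unlike the Gaussian argument, one does not need to exploit the tail behavior of $f_Y$ to extract the sharper $\min(\epsilon^{-\beta},x^{-\beta})$ bound, which is why this lemma is considerably shorter than its Gaussian counterpart.
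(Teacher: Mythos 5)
Your proof is correct, but it takes a different route from the paper's. The paper proves this lemma by a Fourier (Parseval-type) computation: it writes $E|x+\epsilon Y|^{-\beta}$ as an integral of the (distributional) Fourier transform of $|x+\epsilon\cdot|^{-\beta}$, which is a constant multiple of $\epsilon^{-1}|\xi/\epsilon|^{\beta-1}e^{ix\xi}$, against the characteristic function $e^{-|\xi|^{\alpha}}$ of the stable law, and then bounds $\int_{\mathbb{R}}|\xi|^{\beta-1}e^{-|\xi|^{\alpha}}d\xi<\infty$; this is the same technique the paper reuses for Lemma \ref{Lem7.6}. You instead factor out $\epsilon^{-\beta}$ by the elementary identity $E|x+\epsilon Y|^{-\beta}=\epsilon^{-\beta}E|x/\epsilon+Y|^{-\beta}$ and prove $\sup_{z}E|z+Y|^{-\beta}<\infty$ by splitting at $|u|\le 1$ and using boundedness of the stable density $f_Y$ (which indeed holds for every $\alpha\in(0,2]$, since $\|f_Y\|_\infty\le\frac{1}{2\pi}\int_{\mathbb{R}}e^{-|\xi|^{\alpha}}d\xi<\infty$). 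Your argument is more elementary and mirrors the structure of the Gaussian Lemma \ref{lemma1} (reduction to a uniform translated moment bound), and it avoids the somewhat formal use of the distributional Fourier transform of $|\cdot|^{-\beta}$ and the unstated Parseval step in the paper's version; the paper's computation, on the other hand, is shorter once one accepts that machinery and is stylistically consistent with the adjacent lemma. Both yield exactly the claimed bound $C\epsilon^{-\beta}$, and, as you note, neither is asked to produce the sharper $\min(\epsilon^{-\beta},x^{-\beta})$ estimate of the Gaussian case.
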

\begin{proof}
\begin{align*}
&E|x+\epsilon Y|^{-\beta}\\
=&\int_{\mathbb{R}}
\mathcal{F}\{|x+\epsilon\cdot|^{-\beta}\}(\xi)e^{-|\xi|^\alpha}d\xi\\
=&\int_{\mathbb{R}}
\mathcal{F}\{|\epsilon\cdot|^{-\beta}\}(\xi)e^{ix\xi}e^{-|\xi|^\alpha}d\xi\\
=&\int_{\mathbb{R}}
\frac{1}{\epsilon}\mathcal{F}\{|\cdot|^{-\beta}\}(\frac{\xi}{\epsilon})e^{ix\xi}e^{-|\xi|^\alpha}d\xi\\
=&\int_{\mathbb{R}}
\frac{1}{\epsilon}|\frac{\xi}{\epsilon}|^{\beta-1}e^{ix\xi}e^{-|\xi|^\alpha}d\xi\\
\le&\epsilon^{-\beta}\int_{\mathbb{R}}|\xi|^{\beta-1}e^{-|\xi|^\alpha}d\xi\\
\le&C\epsilon^{-\beta}.
\end{align*}
\end{proof}

The following two lemmas are Lemma A.2 and Lemma A.3 respectively in \cite{hns}.
\begin{lemma}
\label{lemma2} Suppose $\alpha\in (0,1)$. There exists a constant
$C>0$ depending only on $\alpha$ , such that
\begin{equation*}
\sup_{\epsilon,\epsilon^{\prime}}
\int_{\mathbb{R}^2}p_\epsilon(x_1+y_1)
p_{\epsilon^{\prime}}(x_2+y_2)|y_1-y_2|^{-\alpha}dy_1dy_2\le
C|x_1-x_2|^{-\alpha}.
\end{equation*}
\end{lemma}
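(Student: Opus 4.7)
The approach is to recognize the double integral as a single Gaussian expectation via the convolution property of the heat kernel, and then to invoke Lemma~\ref{lemma1}.

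First, I would change variables $z_i = x_i + y_i$, $i=1,2$. The Jacobian is one, the heat kernel factors become $p_\epsilon(z_1)\,p_{\epsilon'}(z_2)$, and the singular factor becomes $|(z_1 - z_2) - (x_1 - x_2)|^{-\alpha}$. Reading this as an expectation, the integral equals
\begin{equation*}
\mathbb{E}\,\bigl|(Z_1 - Z_2) - (x_1 - x_2)\bigr|^{-\alpha},
\end{equation*}
where $Z_1 \sim N(0,\epsilon)$ and $Z_2 \sim N(0,\epsilon')$ are independent.

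Second, I would use that $Z_1 - Z_2$ is centered Gaussian with variance $\epsilon + \epsilon'$, so $Z_1 - Z_2 \stackrel{d}{=} \sqrt{\epsilon + \epsilon'}\,X$ for a standard normal $X$. By the symmetry $X \stackrel{d}{=} -X$, the sign of $x_1 - x_2$ is irrelevant, so the integral equals
\begin{equation*}
\mathbb{E}\bigl||x_1 - x_2| + \sqrt{\epsilon + \epsilon'}\,X\bigr|^{-\alpha}.
\end{equation*}

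Finally, I would apply Lemma~\ref{lemma1} with $\beta = \alpha \in (0,1)$, $x = |x_1 - x_2|$, and parameter $\sqrt{\epsilon + \epsilon'}$ in place of $\epsilon$. Taking the $x^{-\beta}$ half of the minimum in that lemma yields the bound $C\,|x_1 - x_2|^{-\alpha}$ uniformly in $\epsilon, \epsilon'$, as required. (The degenerate case $x_1 = x_2$ makes the right-hand side of the claim infinite, so the inequality is trivially valid there.) No step poses a real obstacle; the only crucial point is that the constant furnished by Lemma~\ref{lemma1} is independent of the Gaussian width, which is precisely what allows the supremum over $\epsilon,\epsilon'$ to be closed.
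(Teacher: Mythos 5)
Your proof is correct and follows essentially the same route as the paper: rewrite the double integral as a Gaussian expectation of the form $E|x+\epsilon X|^{-\alpha}$ and invoke Lemma~\ref{lemma1}. In fact you spell out slightly more carefully than the paper the reduction of the two independent Gaussians to a single one of variance $\epsilon+\epsilon'$, which is the step the paper leaves implicit.
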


%\begin{proof}
%We can write
%\begin{equation*}
%\int_{\mathbb{R}^2}p_\epsilon(x_1+y_1)p_{\epsilon^{%
%\prime}}(x_2+y_2)|y_1-y_2|^{-\alpha}dy_1dy_2= {\ E}\ \left(
%|{\varepsilon}
%X_1-x_1-{\varepsilon}^{\prime}X_2+x_2|^{-{\alpha}}\right) \,.
%\end{equation*}
%Thus Lemma \ref{lemma2} follows directly from Lemma \ref{lemma1}.
%\end{proof}

\begin{lemma}
\label{lemma3} Suppose $\alpha \in (0,1)$. There exists a constant
$C>0$ depending only on $\alpha$ , such that
\begin{equation*}
\sup_{\delta,\delta^{\prime}}\int_0^t\int_0^t\varphi_\delta(t-s_1-r_1)%
\varphi_{\delta^{\prime}}(t-s_2-r_2)|r_1-r_2|^{-\alpha}dr_1dr_2
\le C|s_1-s_2|^{-\alpha}
\end{equation*}
\end{lemma}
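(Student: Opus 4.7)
The plan is to reduce the double integral to a one-dimensional integral of $|w-x|^{-\alpha}$ against an approximate delta in $x$, where $w := s_1-s_2$, and then perform a simple case analysis on the size of $|w|$ relative to $\delta+\delta'$. Since the integrand is nonnegative, I will first enlarge the domain from $[0,t]^2$ to $\mathbb{R}^2$; this is harmless and simplifies changes of variables.

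After the substitutions $u_i = t-s_i-r_i$ for $i=1,2$, the integral becomes
$$
\int_{\mathbb{R}^2}\varphi_\delta(u_1)\,\varphi_{\delta'}(u_2)\,|w-(u_1-u_2)|^{-\alpha}\,du_1\,du_2,
$$
where $w = s_1 - s_2$. Integrating out the diagonal variable yields
$$
\int_{\mathbb{R}}\psi(x)\,|w-x|^{-\alpha}\,dx, \qquad \psi(x) := \int_{\mathbb{R}}\varphi_\delta(u)\,\varphi_{\delta'}(u-x)\,du,
$$
so that $\psi$ is a probability density supported on $[-\delta',\delta]$ and bounded by $1/\max(\delta,\delta')$. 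Thus it suffices to show the elementary bound
$$
\int_{\mathbb{R}}\psi(x)\,|w-x|^{-\alpha}\,dx \le C|w|^{-\alpha},
$$
uniformly in $\delta,\delta'>0$, with $C$ depending only on $\alpha$.

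I will split into two regimes. In the \emph{far} regime $|w|\ge 2(\delta+\delta')$, any $x$ in the support of $\psi$ satisfies $|x|\le |w|/2$, hence $|w-x|\ge |w|/2$, and the bound is immediate with constant $2^\alpha$ using $\int\psi = 1$. In the \emph{near} regime $|w| < 2(\delta+\delta')$, I bound $\psi$ by $1/\max(\delta,\delta') \le 2/(\delta+\delta')$ and note that
$$
\int_{-\delta'}^{\delta}|w-x|^{-\alpha}\,dx \le \frac{2^\alpha}{1-\alpha}(\delta+\delta')^{1-\alpha}
$$
(the maximum of $\int_{I}|y|^{-\alpha}dy$ over intervals $I$ of fixed length is attained when $I$ is centered at the origin). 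Combining these with $\delta+\delta' > |w|/2$ gives the bound $C|w|^{-\alpha}$.

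The only nonroutine step is the change of variables and the recognition that the convolution $\psi$ satisfies both the integral-one normalization and the correct sup bound; once this is set up, the case split is immediate. I do not anticipate any serious obstacle, since both ingredients come down to the local integrability of $|x|^{-\alpha}$ for $\alpha\in(0,1)$.
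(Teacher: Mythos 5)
Your proof is correct, and it takes a genuinely different route from the one in the paper. The paper's proof of Lemma \ref{lemma3} is a one-line domination argument: it observes that $\varphi_{\sqrt{\delta}}(x)\le \sqrt{2\pi e}\,p_{\delta}(x)$, so the statement reduces (after reparametrizing $\delta\mapsto\delta^2$) to Lemma \ref{lemma2}, the analogous bound for Gaussian kernels, which in turn rests on the Gaussian estimate $E|x+\epsilon X|^{-\beta}\le C\min(\epsilon^{-\beta},x^{-\beta})$ of Lemma \ref{lemma1}. You instead work directly with the uniform kernels: after the change of variables the integral is the convolution $\psi=\varphi_\delta*\check\varphi_{\delta'}$ tested against $|w-\cdot|^{-\alpha}$, and the two-regime split ($|w|$ large or small compared with $\delta+\delta'$) together with $\int\psi=1$, $\|\psi\|_\infty\le 1/\max(\delta,\delta')$ and local integrability of $|x|^{-\alpha}$ gives the bound with an explicit constant depending only on $\alpha$. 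Your argument is self-contained and elementary (it never uses the Gaussian lemmas), at the cost of a short computation; the paper's argument is essentially free once Lemma \ref{lemma2} is available, which is why it is stated that way there. One trivial bookkeeping point: with $u_i=t-s_i-r_i$ one gets $|r_1-r_2|=|(s_2-s_1)-(u_1-u_2)|$, so your $w$ is $s_2-s_1$ rather than $s_1-s_2$; since only $|w|$ enters, this does not affect anything.
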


%\begin{proof}
%Since
%\begin{equation*}
%p_\delta(x)\ge p_\delta (x) I_{[0,\sqrt\delta]}(x)=\dfrac{1}{\sqrt{2\pi\delta%
%}}e^{-\frac{x^2}{2\delta}}I_{[0,\sqrt\delta]}(x)\ge \dfrac{1}{\sqrt{2\pi e }}%
%\varphi_{\sqrt\delta}(x)\,,
%\end{equation*}
%the lemma follows from Lemma $\ref{lemma2}$.
%\end{proof}

\begin{lemma}
\label{lemma4} Suppose that $2H_{0}+\frac{1}{\alpha}\sum_{i=1}^d (2H_{i}-2)>1$. Let $%
X^{1},\dots ,X^{d}$ be independent one-dimensional symmetric $\alpha$-stable
L\'evy motion. Then we have
\begin{equation*}
E\left(
\int_{0}^{t}s^{2H_{0}-2}\prod_{i=1}^{d}|X_{s}^{i}|^{2H_{i}-2}ds\right)
^{2}<\infty .
\end{equation*}
\end{lemma}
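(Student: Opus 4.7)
The plan is to expand the square as a double integral over $[0,t]^2$, factor the expectation using the independence of $X^{1},\dots,X^{d}$, and then estimate the joint moment $E[|X_s^i|^{2H_i-2}|X_r^i|^{2H_i-2}]$ for each coordinate using the independent increments of the stable motion together with Lemma \ref{lemma1'}.

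By Fubini and independence of the coordinates,
\[
E\Big(\int_0^t s^{2H_0-2}\prod_{i=1}^d |X_s^i|^{2H_i-2}\,ds\Big)^2
=\int_0^t\!\int_0^t s^{2H_0-2}r^{2H_0-2}\prod_{i=1}^d E\big[|X_s^i|^{2H_i-2}|X_r^i|^{2H_i-2}\big]\,ds\,dr,
\]
and by symmetry it suffices to estimate the integrand on $\{0<s<r<t\}$. For each $i$, writing $X_r^i=X_s^i+(X_r^i-X_s^i)$ with the increment independent of $X_s^i$ and distributed as $(r-s)^{1/\alpha}Y$ for a standard symmetric $\alpha$-stable $Y$, Lemma \ref{lemma1'} with $\beta=2-2H_i\in(0,1)$ and $\epsilon=(r-s)^{1/\alpha}$ gives
\[
E\big[|X_r^i|^{2H_i-2}\,\big|\,X_s^i\big]\le C(r-s)^{(2H_i-2)/\alpha}.
\]
Multiplying by $|X_s^i|^{2H_i-2}$, taking expectation, and using the scaling $X_s^i\stackrel{d}{=}s^{1/\alpha}Y$ together with the finiteness of $E|Y|^{2H_i-2}$ (since $2H_i-2>-1$) yields
\[
E\big[|X_s^i|^{2H_i-2}|X_r^i|^{2H_i-2}\big]\le C\,s^{(2H_i-2)/\alpha}(r-s)^{(2H_i-2)/\alpha}.
\]

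Taking the product over $i$ and setting $\delta:=\tfrac1\alpha\sum_{i=1}^d(2H_i-2)$ and $\kappa=2H_0+\delta-1$, the remaining deterministic integral is, up to a constant,
\[
\int_0^t r^{2H_0-2}\!\int_0^r s^{2H_0-2+\delta}(r-s)^{\delta}\,ds\,dr.
\]
The substitution $u=s/r$ shows that the inner integral equals $r^{2H_0-1+2\delta}\,B(2H_0-1+\delta,\,1+\delta)$. The Beta function is finite since both parameters are strictly positive: $2H_0-1+\delta=\kappa>0$ by hypothesis, and $1+\delta>0$ follows automatically from $\kappa>0$ together with $H_0<1$ (they force $\delta>1-2H_0>-1$). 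The outer integral then reduces to $\int_0^t r^{2\kappa-1}\,dr<\infty$, completing the proof.

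The main obstacle is the joint moment estimate: a naive Cauchy–Schwarz bound $E|X_s^i|^{2(2H_i-2)}$ would demand $H_i>3/4$, which is not assumed. Exploiting independent increments and applying Lemma \ref{lemma1'} (which requires nothing beyond $\beta\in(0,1)$) is the key workaround. The other delicate point is verifying the integrability of $(r-s)^\delta$ near the diagonal, which is precisely guaranteed by the hypothesis $\kappa>0$ and the bound $H_0<1$.
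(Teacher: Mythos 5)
Your proposal is correct and follows essentially the same route as the paper: expand the square, condition on the increment $X_r^i-X_s^i$ (independent of $X_s^i$, scaling as $(r-s)^{1/\alpha}Y$), bound via Lemma \ref{lemma1'} with $\beta=2-2H_i$, use scaling and $E|Y|^{2H_i-2}<\infty$, and conclude from the integrability of $r^{2H_0-2}s^{2H_0-2+\delta}(r-s)^{\delta}$ under $\kappa>0$. Your explicit Beta-function evaluation just makes precise the finiteness of the last deterministic integral, which the paper states directly (and your exponent $s^{(2H_i-2)/\alpha}$ is the correct one, consistent with the paper's final integral despite the typo $r^{(2H_i-1)/\alpha}$ in its display (7.3)).
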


\begin{proof}
We can write
\begin{eqnarray*}
&&E\left(
\int_{0}^{t}s^{2H_{0}-2}\prod_{i=1}^{d}|X_{s}^{i}|^{2H_{i}-2}ds\right)
^{2}=2\int_{0}^{t}\int_{0}^{s}(sr)^{2H_{0}-2} \\
&&\qquad \times
\prod_{i=1}^{d}E(|X_{s}^{i}|^{2H_{i}-2}|X_{r}^{i}|^{2H_{i}-2})drds
\end{eqnarray*}%
Let $Y$ be a standard symmetric $\alpha$-stable distributed random variable.
From Lemma \ref{lemma1'}, taking into account that $2-2H_{i}<1$,
we have when $r<s$,
\begin{eqnarray}
E(|X_{r}^{i}|^{2H_{i}-2}|X_{s}^{i}|^{2H_{i}-2})
&=&E[|X_{r}^{i}|^{2H_{i}-2}E[|(s-r)^\frac{1}{\alpha}Y+x|^{2H_{i}-2}|_{x=X_{r}^{i}}]]
\notag \\
&\leq &CE[|X_{r}^{i}|^{2H_{i}-2}(s-r)^{\frac{2H_{i}-2}{\alpha}})  \notag \\
&\leq
&Cr^{\frac{2H_{i}-1}{\alpha}}(s-r)^{\frac{2H_{i}-2}{\alpha}}\,.
\label{e.2.3}
\end{eqnarray}%
As a consequence, the conclusion of the lemma follows from the
fact that
\begin{equation*}
\ \int_{0}^{t}\int_{0}^{s}r^{2H_{0}-2+
\frac{1}{\alpha}\sum_{i=1}^{d}(2H_{i}-2)}s^{2H_{0}-2}(s-r)^{\frac{1}{\alpha}\sum_{i=1}^{d}(2H_{i}-2)}drds<\infty,
\end{equation*}%
because $2H_{0}-2+ \frac{1}{\alpha}\sum_{i=1}^{d}(2H_{i}-2)>-1$
and $\frac{1}{\alpha}\sum_{i=1}^{d}(2H_{i}-2)>-1$.
\end{proof}

\begin{lemma}
\label{Lem7.6} For any $0<\beta<1$,
\begin{equation*}
E\big(|\xi|^{-\beta}-|y+\xi|^{-\beta}\big)\le C \min (1, y^2) ,
\end{equation*}
for some constant $C>0$, where $y>0$ and $\xi$ is a standard
symmetric $\alpha$-stable random variable. \end{lemma}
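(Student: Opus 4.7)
The plan is to reuse the Fourier representation that is already established in the proof of Lemma~\ref{lemma1'}. That proof derives, for $0<\beta<1$ and $\xi$ a standard symmetric $\alpha$-stable variable, the identity
\begin{equation*}
E|y+\xi|^{-\beta}=C_\beta\int_{\mathbb{R}}|\omega|^{\beta-1}e^{iy\omega}e^{-|\omega|^\alpha}d\omega,
\end{equation*}
with $C_\beta$ the universal constant coming from the Fourier transform of $|x|^{-\beta}$. Taking $y=0$ gives $E|\xi|^{-\beta}=C_\beta\int|\omega|^{\beta-1}e^{-|\omega|^\alpha}d\omega$, and subtracting, the imaginary parts of $e^{iy\omega}$ vanish by the evenness of the remaining factors, so
\begin{equation*}
E|\xi|^{-\beta}-E|y+\xi|^{-\beta}=C_\beta\int_{\mathbb{R}}|\omega|^{\beta-1}\bigl(1-\cos(y\omega)\bigr)e^{-|\omega|^\alpha}d\omega.
\end{equation*}
In particular this difference is nonnegative, which is already encouraging.

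Next I would exploit the two elementary bounds $1-\cos(y\omega)\le 2$ and $1-\cos(y\omega)\le \tfrac12 y^2\omega^2$. In the regime $y\ge 1$ I would use the first bound to obtain
\begin{equation*}
E|\xi|^{-\beta}-E|y+\xi|^{-\beta}\le 2C_\beta\int_{\mathbb{R}}|\omega|^{\beta-1}e^{-|\omega|^\alpha}d\omega\le C,
\end{equation*}
where finiteness uses $\beta>0$ (integrability at the origin) and the Gaussian-like decay of $e^{-|\omega|^\alpha}$ at infinity. In the regime $0<y<1$ I would use the quadratic bound to obtain
\begin{equation*}
E|\xi|^{-\beta}-E|y+\xi|^{-\beta}\le \tfrac12 C_\beta y^2\int_{\mathbb{R}}|\omega|^{\beta+1}e^{-|\omega|^\alpha}d\omega\le Cy^2,
\end{equation*}
the integral being finite for the same two reasons (now even more comfortably at the origin since $\beta+1>1$). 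Putting the two regimes together yields the claimed bound by $C\min(1,y^2)$.

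The only real obstacle is justifying the Fourier identity for $E|y+\xi|^{-\beta}$, but this is exactly the computation carried out in the proof of Lemma~\ref{lemma1'}, so I would just invoke it. An alternative route, should one wish to avoid the distributional Fourier transform of $|x|^{-\beta}$, is to write $g(y)=E|y+\xi|^{-\beta}=\int|z|^{-\beta}p(z-y)dz$ using the smooth stable density $p$, observe that $g$ is even so $g'(0)=0$, and bound $g''$ uniformly via $g''(y)=\int|z|^{-\beta}p''(z-y)dz$ (finite because $|z|^{-\beta}$ is locally integrable and $p''$ is bounded with good decay); then $g(0)-g(y)\le \tfrac12\sup|g''|\,y^2$ handles small $y$, while the large-$y$ bound is trivial from $E|y+\xi|^{-\beta}\ge 0$. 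Either route completes the proof in a few lines.
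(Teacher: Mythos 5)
Your proposal is correct and follows essentially the same route as the paper: the same Fourier representation giving $E|\xi|^{-\beta}-E|y+\xi|^{-\beta}=C\int|\omega|^{\beta-1}(1-\cos(y\omega))e^{-|\omega|^\alpha}d\omega$, with the bound $1-\cos(y\omega)\le Cy^{2}\omega^{2}$ for the quadratic part and a trivial constant bound for large $y$. The only differences are cosmetic (the paper gets the constant bound from finiteness of $E|\xi|^{-\beta}$ rather than from $1-\cos\le 2$, and your fallback argument via the smooth stable density is an extra, not needed, alternative).
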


\begin{proof}
Notice first that $E\big(|\xi|^{-\beta}-|y+\xi|^{-\beta}\big)<C$ where $C>0$ is a constant, since $%
\lim_{y\to \infty}E|y+\xi|^{-\alpha}=0$.\\
 On the other hand,
\begin{align*}
&E\big(|\xi|^{-\beta}-|y+\xi|^{-\beta}\big)\\
=&\int_{\mathbb{R}}\mathcal{F}\left\{|\cdot|^{-\beta}-|y+\cdot|^{-\beta}\right\}(\xi)
e^{-|\xi|^\alpha}d\xi\\
=&C\int_{\mathbb{R}}|\xi|^{\beta-1}e^{-|\xi|^\alpha}(1-e^{iy\xi})d\xi\\
=&C\int_{\mathbb{R}}|\xi|^{\beta-1}e^{-|\xi|^\alpha}(1-\cos(y\xi))d\xi\\
\le&C\int_{\mathbb{R}}|\xi|^{\beta-1}e^{-|\xi|^\alpha}y^2\xi^2d\xi\\
\le&C y^2.
\end{align*}

\end{proof}

\medskip
\parindent=0pt

Xia Chen\\
Department of Mathematics\\
University of Tennessee\\
Knoxville, TN 37996-1300\\
Email: xchen@math.utk.edu

\smallskip
Yaozhong Hu  \\
Department of Mathematics \\
University of Kansas \\
Lawrence, Kansas, 66045 \\
Email: hu@math.ku.edu

\smallskip
Jian Song \\Department of Mathematics  and \\
Department of Statistics \& Actuarial Science\\
University of Hong Kong\\
Pokfulam, Hong Kong\\
Email: txjsong@hku.hk

\begin{thebibliography}{99}
\bibitem{A}  Applebaum, D. L\'evy processes and stochastic calculus. Second edition. 
 Cambridge University Press, 
  2009.

\bibitem{bt} Balan, R.M. and Tudor, C. Stochastic heat equation with multiplicative fractional-colorer noise. Journal of Probability (2010) 23: 834-870.

\bibitem{chen} Chen, X. (2009). Random Walk Intersections:
Large Deviations and Related Topics. Mathematical Surveys and Monographs, AMS.  Vol. 157, Providence



%\bibitem{fred} Freidlin, M. Functional integration and partial differential
%equations. Annals of Mathematics Studies, 109. Princeton
%University Press, Princeton, NJ, 1985.



\bibitem{hu} Hu, Y. Heat equation with white noise potentials. Appl. Math. Optim. 43 (2001) 221-243.


\bibitem{hln} Hu, Y., Lu, F.  and Nualart, D. Feynman-Kac formula  for
spde driven by fractional Brownian fields with Hurst parameter
$H<1/2$.  To Appear in Annals of Probability.

\bibitem{HM}  Hu, Y.  and  Meyer, P. A.  On the approximation of multiple Stratonovich integrals. Stochastic processes, 141-147, Springer,  1993. 


\bibitem{hunualart} Hu, Y. and Nualart, D. Stochastic heat equation driven
by fractional noise and local time. Probab. Theory Related Fields
143 (2009), no. 1-2, 285--328.

\bibitem{hns} Hu, Y., Nualart, D. and Song, J. Feynman-Kac formula for heat
equation driven by a fractional white noise.
The Annals of Probability 2011, Vol. 39, No. 1, 291-326.



\bibitem{hy} Hu, Y. and Yan, J.A.
Wick calculus for nonlinear Gaussian functionals. Acta Math. Appl.
Sin. Engl. Ser. 25 (2009), no. 3, 399-414.

\bibitem{IW}  Ikeda, N and  Watanabe, S.
 Stochastic differential equations and diffusion processes. Second edition. 
  North-Holland Publishing Co., 1989.

\bibitem{legall} Le Gall, J.-F. Exponential moments for the renormalized
self-intersection local time of planar Brownian motion.
S\'{e}minaire de Probabilit\'{e}s, XXVIII, 172--180, Lecture Notes
in Math., 1583, Springer, Berlin, 1994.

\bibitem{nualart} Nualart, D. The Malliavin calculus and related topics.
Second edition. Springer, Berlin, 2006.
%
%\bibitem{RV} Russo, F. G. and Vallois, P.  Forward, backward and symmetric
%stochastic integration.  Probab. Theory Related Fields 97 (1993),
%no. 3, 403--421.

\bibitem{ST} Samorodnitsky, G. and Taqqu, M.S.
Stable non-Gaussian random processes: stochastic models with infinite variance. Chapman \& Hall, 1994



\bibitem{tudor} Tudor, C. Fractional bilinear stochastic equations with the drift in the first fractional chaos. Stochastic Anal. Appl. 22 (2004) 1209-1233.



\end{thebibliography}
\end{document}